\documentclass[reqno,10pt]{amsart}

\usepackage{amsmath}
\usepackage{amssymb}
\allowdisplaybreaks[4]

\newtheorem{thm}{Theorem}[section]
\newtheorem{prop}[thm]{Proposition}
\newtheorem{lem}[thm]{Lemma}
\newtheorem{q}[thm]{Question}
\newtheorem{cor}[thm]{Corollary}
\newtheorem{conj}[thm]{Conjecture}
\newtheorem*{claim}{Claim}
\newtheorem*{fact}{Fact}

\theoremstyle{definition}
\newtheorem{definition}[thm]{Definition}
\newtheorem{example}[thm]{Example}

\theoremstyle{remark}
\newtheorem{remark}[thm]{Remark}

\numberwithin{equation}{section}

\begin{document}

\title{Bounding the volumes of singular weak log del Pezzo surfaces}
\author{Chen Jiang}
\address{Graduate School of Mathematical Sciences, the University of Tokyo,
3-8-1 Komaba, Meguro-ku, Tokyo 153-8914, Japan.}
\email{cjiang@ms.u-tokyo.ac.jp}

\begin{abstract}
We give an optimal upper bound for the anti-canonical volume of an
$\epsilon$-lc weak log del Pezzo surface. Moreover, we consider the
relation between the bound of the volume and the Picard number of
the minimal resolution of the surface. Furthermore we consider blowing up
several points on a Hirzebruch surface in general position and give
some examples of smooth weak log del Pezzo surfaces.
\end{abstract}

\maketitle

\tableofcontents

\section{Introduction}
Throughout this article, we work over an algebraically closed field
of arbitrary characteristic. We will freely use the standard notations in \cite{KM}. We start by some basic definitions.

\begin{definition}
Let $X$ be a normal projective surface and $\Delta$ be an
$\mathbb{R}$-divisor on $X$ with coefficients in $[0,1]$ such that
$K_X+\Delta$ is $\mathbb{R}$-Cartier. We say that $(X, \Delta)$ is a
\emph{weak log del Pezzo surface} if $-(K_X+\Delta)$ is nef and big.
\end{definition}

\begin{definition}
Let $X$ be a normal projective variety and let $\Delta$ be an
$\mathbb{R}$-divisor on $X$  such that $K_X+\Delta$ is
 $\mathbb{R}$-Cartier. Let $f: Y\rightarrow X$ be a log
resolution of $(X, \Delta)$, write
$$
K_Y =f^*(K_X+\Delta)+\sum a_iF_i,
$$
where $F_i$ is a prime divisor. For some $\epsilon \in (0,1]$, the
pair $(X,\Delta)$ is called

(a) \emph{$\epsilon$-kawamata log terminal} (\emph{$\epsilon$-klt},
for short) if $a_i> -1+\epsilon$ for all $i$, or

(b) \emph{$\epsilon$-log canonical} (\emph{$\epsilon$-lc}, for
short) if $a_i\geqslant -1+\epsilon$ for all $i$.
\end{definition}

In this article we first prove the following theorem.

\begin{thm}\label{main thm}
Let $(X,\Delta)$ be an $\epsilon$-lc weak log del Pezzo surface.
Then the anti-canonical volume
${\rm Vol}(-(K_X+\Delta))=(K_X+\Delta)^2$ satisfies
$$
(K_X+\Delta)^2\leqslant \max\bigg\{9, \lfloor 2/\epsilon \rfloor+
4+\frac{4}{\lfloor 2/\epsilon \rfloor}\bigg\},
$$
where $\lfloor~\rfloor$ means round down.

Moreover, the equality holds if and only if one of the
following holds:

(1) $\epsilon >\frac{2}{5}$ and  $(X,\Delta)$ is $(\mathbb{P}^2,
0)$.

(2) $\epsilon \leqslant \frac{1}{2}$ and $(X,\Delta)$ is
$(\mathbb{F}_n, (1-\frac{2}{n})S_n)$ or $(\mbox{PC}_n, 0)$, where
$n=\lfloor 2/\epsilon \rfloor$, $\mathbb{F}_n$ is the $n$-th
Hirzebruch surface, $S_n\subset \mathbb{F}_n$ is the unique curve
with negative self-intersection and $\mbox{PC}_n$ is the projective
cone over a rational normal curve of degree $n$;
\end{thm}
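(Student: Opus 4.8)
The plan is to pass to the minimal resolution, run the minimal model program to reach a minimal rational surface, and then carry out an explicit computation in its Picard group; the core difficulty is the extremal analysis on Hirzebruch surfaces together with the equality characterization.

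First I would reduce to the smooth case. Let $\pi\colon Y\to X$ be the minimal resolution and write $K_Y+B=\pi^*(K_X+\Delta)$, where $B$ is the sum of the strict transform of $\Delta$ and the exceptional divisors taken with their (nonnegative) coefficients $-a_i$. Since $(X,\Delta)$ is $\epsilon$-lc and $\pi$ is minimal, $B$ is effective with all coefficients in $[0,1-\epsilon]$, and since $\pi^*$ preserves intersection numbers, $L:=-(K_Y+B)=\pi^*(-(K_X+\Delta))$ is again nef and big with $L^2=(K_X+\Delta)^2$. As $X$ is rational, $Y$ is a smooth rational surface, so from now on I work with the smooth pair $(Y,B)$.

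Next I would run the MMP, contracting $(-1)$-curves along $\phi\colon Y\to Z$ until $Z$ is a relatively minimal rational surface, i.e. $\mathbb{P}^2$ or a Hirzebruch surface $\mathbb{F}_n$. The key observation is that the pushforward of a nef class under the blow-down $g$ of a single $(-1)$-curve $E$ stays nef: writing $b=L\cdot E\geq0$ one has $g^*(g_*L)=L+bE$, and for any curve $C_Z$ with strict transform $C$ and $m=\mathrm{mult}_{g(E)}C_Z=C\cdot E$ the projection formula gives $(g_*L)\cdot C_Z=(L+bE)\cdot(C+mE)=L\cdot C+bm\geq0$. Iterating, $L_Z:=\phi_*L=-(K_Z+B_Z)$ is nef and big on $Z$, with $B_Z=\phi_*B$ still of coefficients in $[0,1-\epsilon]$, while the blow-down formula $L^2=L_Z^2-\sum(L\cdot E_i)^2$ gives $L^2\leq L_Z^2$. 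It therefore suffices to bound $L_Z^2$ on the minimal surface $Z$. On $\mathbb{P}^2$, writing $B_Z\equiv dH$ gives $L_Z=(3-d)H$, so nefness forces $d\leq3$ and $L_Z^2=(3-d)^2\leq9$. On $\mathbb{F}_n$ I would use $\mathrm{Pic}(\mathbb{F}_n)=\mathbb{Z}S_n\oplus\mathbb{Z}F$ with $S_n^2=-n$, $F^2=0$, $S_n\cdot F=1$, and set $t=\mathrm{coeff}_{S_n}B_Z\leq1-\epsilon$. Writing $B_Z=tS_n+R$ with $R\geq0$ not containing $S_n$, the nefness inequality $L_Z\cdot S_n\geq0$ together with $R\cdot S_n\geq0$ forces $n(1-t)\leq2$, i.e. $t\geq1-2/n$; combined with $t\leq1-\epsilon$ this already yields $n\leq2/\epsilon$. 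Writing $L_Z=aS_n+bF$, the constraints $a\geq0$, $an\leq b\leq\min\{n+2,\ n(a+t-1)+2\}$ and $t\leq\min\{1-\epsilon,\ 2-a\}$ reduce the maximization of $L_Z^2=a(2b-an)$ to a two-variable problem whose optimum is $a=1+2/n$, $t=1-2/n$, $b=n+2$, giving $L_Z^2\leq n+4+4/n$. Since $n+4+4/n$ is increasing for $n\geq2$ and $n\leq2/\epsilon$, this is at most $\lfloor2/\epsilon\rfloor+4+4/\lfloor2/\epsilon\rfloor$; the small remaining cases $\mathbb{F}_0$ and the non-minimal $\mathbb{F}_1$ reduce to $\leq8$ or to $\mathbb{P}^2$. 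Combining with the $\mathbb{P}^2$ bound proves the stated inequality.

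Finally I would analyze equality. Equality $L^2=L_Z^2$ forces $L\cdot E=0$ for every contracted $(-1)$-curve, so $\phi$ is crepant for $(Y,B)$, and equality in the $Z$-computation pins down $Z$ and $B_Z$. On $\mathbb{P}^2$ it forces $B_Z=0$, so $(X,\Delta)=(\mathbb{P}^2,0)$, which is the unique maximizer exactly when $9>n+4+4/n$, i.e. $\epsilon>\frac25$. On $\mathbb{F}_n$ the optimum occurs at $B_Z=(1-2/n)S_n$, where moreover $L_Z\cdot S_n=0$; hence $S_n$ may itself be contracted crepantly, producing either $(\mathbb{F}_n,(1-2/n)S_n)$ or the projective cone $\mathrm{PC}_n$ over the rational normal curve of degree $n$ (with zero boundary, whose minimal resolution is $\mathbb{F}_n$), both of volume $n+4+4/n$ and $\epsilon$-lc precisely for $\epsilon\leq2/n$, i.e. $n=\lfloor2/\epsilon\rfloor$. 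The main obstacle is exactly this last step together with the Hirzebruch maximization: one must verify that no configuration with additional boundary components or a different value of $a$ beats the bound, and must carefully track the overlap of the two equality regimes near $\epsilon=\frac25$ and $\epsilon=\frac12$, where the $\mathbb{P}^2$ example and the $n=4$ example share the value $9$.
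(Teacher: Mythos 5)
Your plan follows essentially the same route as the paper: reduce to the minimal resolution, push the class $L=-(K_Y+B)$ down to a minimal rational surface with nefness preserved (your identity $(g_*L)^2=L^2+(L\cdot E)^2$ is the explicit form of what the paper gets from the Negativity Lemma in its Case 3), carry out the maximization on $\mathbb{F}_n$ to obtain $n+4+\frac{4}{n}$ with $n\leqslant 2/\epsilon$ forced by $L_Z\cdot S_n\geqslant 0$ and $t\leqslant 1-\epsilon$, and characterize equality via crepancy. Your Hirzebruch optimization is correct, and your equality descent --- on $\mathbb{F}_n$ the only curve of negative self-intersection is $S_n$, so the only possible singular model is $\mathrm{PC}_n$ --- is a legitimate, more elementary replacement for the paper's appeal to uniqueness of ample models (\cite[Lemma 3.6.5]{BCHM}).

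There is, however, one genuine gap: the sentence ``As $X$ is rational, $Y$ is a smooth rational surface'' assumes exactly the nontrivial input your argument needs. Rationality is not a hypothesis of the theorem; it is a theorem, and without it your MMP may terminate at a $\mathbb{P}^1$-bundle over a curve of positive genus, a case for which your proposal contains no argument and where the Picard-group computation you rely on is unavailable. This is precisely the content of the paper's Lemma \ref{rat lem} (= \cite[Lemma 1.4]{AM}), and its proof is not formal: one first uses the Base Point Free theorem to produce an effective nef and big $D$ with $K_Y+B+D\equiv 0$ and coefficients still at most $1-\epsilon$, and then excludes a positive-genus base in two steps (a curve of negative self-intersection would have to be a smooth rational multisection of the ruling, impossible over a positive-genus base; and if no negative curve exists, then $0\geqslant 8-8g=K^2=(B+D)^2\geqslant 0$ forces all intersection numbers to vanish, contradicting bigness of $D$). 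Your proof becomes complete once this lemma is proved or cited. Two smaller, fixable slips: your claim that $(\mathbb{P}^2,0)$ ``is the unique maximizer exactly when $9>n+4+4/n$, i.e. $\epsilon>\frac{2}{5}$'' conflates two thresholds --- $\mathbb{P}^2$ achieves equality iff $\epsilon>\frac{2}{5}$, but it is the \emph{unique} maximizer only for $\epsilon>\frac{1}{2}$, as you implicitly concede when mentioning the overlap; and in the equality analysis you should say explicitly that a nontrivial crepant blow-up of $(\mathbb{F}_n,(1-\frac{2}{n})S_n)$ or $(\mathbb{P}^2,0)$ cannot carry an effective boundary (all multiplicities of the boundary are strictly less than $1$), which is the step forcing $Y=Z$ before you descend to $X$.
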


\begin{remark}
By the examples, we can see that this bound is also an optimal bound for $\epsilon$-lc log del Pezzo surface or  $\epsilon$-lc log del Pezzo surface with Picard number one.
\end{remark}

The motivation of this kind of problem is the following B-A-B
Conjecture due to A. Borisov, L. Borisov and V. Alexeev.
\begin{definition}
Let $X$ be a normal projective variety and $\Delta$ be a
$\mathbb{Q}$-divisor on $X$ with coefficients in $[0,1]$ such that
$K_X+\Delta$ is $\mathbb{Q}$-Cartier. We say that $(X, \Delta)$ is a
\emph{log $\mathbb{Q}$-Fano variety} if $-(K_X+\Delta)$ is ample.
\end{definition}

\begin{definition}
A collection of varieties $\{X_\lambda\}_{\lambda\in \Lambda}$ is
said to be \emph{bounded} if there exists $h:\mathcal{X}\rightarrow
S$ a morphism of finite type of Neotherian schemes such that for
each $X_\lambda$, $X_\lambda\simeq \mathcal{X}_s$ for some $s\in S$.
\end{definition}

\begin{conj}[Borisov-Alexeev-Borisov]
Fix $0<\epsilon<1$, an integer $n>0$, and consider the set of all
$n$-dimensional $\epsilon$-klt log $\mathbb{Q}$-Fano varieties $(X,
\Delta)$. The set of underlying varieties $\{X\}$ is bounded.
\end{conj}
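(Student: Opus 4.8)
The plan is to follow the standard strategy for proving boundedness of a family of Fano varieties: reduce the geometric statement to the control of a few numerical invariants, and then invoke the Hilbert-scheme formalism. Concretely, a collection of $n$-dimensional projective varieties is bounded as soon as one exhibits an integer $m=m(n,\epsilon)$ and a constant $C=C(n,\epsilon)$ such that for every member $X$ the divisor $-mK_X$ is very ample with $(-mK_X)^n\leqslant C$; such an embedding realizes all the $X$ as closed subschemes of a fixed $\mathbb{P}^N$ with uniformly bounded Hilbert polynomial, and these are parametrized by a scheme of finite type. The whole problem thus splits into (i) a uniform upper bound for the anti-canonical volume, (ii) a uniform lower bound for it, and (iii) effective very-ampleness of a bounded multiple of $-K_X$ after first achieving effective birationality. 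The $\epsilon$-klt hypothesis is decisive throughout: it forces every log discrepancy to stay uniformly away from $-1$, which is exactly what keeps log canonical thresholds, multiplicities and complements under uniform control; weakening it to mere klt makes the statement false. That this strategy is sound is witnessed by the cases already settled by the same philosophy, namely dimension two (Alexeev) and the toric case (A.\ and L.\ Borisov), which also suggest the inductive structure needed in general.

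First I would establish the uniform upper bound $\mathrm{Vol}(-(K_X+\Delta))=(-(K_X+\Delta))^n\leqslant M(n,\epsilon)$. In dimension two this is precisely Theorem~\ref{main thm} above, and one expects the analogous bound in every dimension by the same mechanism: a very large volume forces points of very high multiplicity on the general member of a suitable anti-pluricanonical system, producing a log discrepancy below $-1+\epsilon$ and contradicting $\epsilon$-kltness. The opposite and harder direction is a uniform positive \emph{lower} bound $\mathrm{Vol}(-(K_X+\Delta))\geqslant v_0(n,\epsilon)>0$. I would obtain this through an effective-birationality argument: using multiplier ideals and Nadel vanishing one cuts out a non-klt center through a generic point, then isolates it and lowers its dimension, inducing on $\dim X$ to show that $|-m(K_X+\Delta)|$ defines a birational map for some $m=m(n,\epsilon)$; birationality together with $\epsilon$-kltness then bounds the volume from below. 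This effective-birationality step is where the principal difficulty lies: arranging the choice of center, the vanishing theorems, and the induction so that $m$ is uniform across the entire family—rather than depending on the individual variety—is the technical heart of the matter.

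With the volume pinned between two positive constants and $|-mK_X|$ birational for a uniform $m$, the varieties become \emph{birationally} bounded, i.e.\ birational to subvarieties of a fixed $\mathbb{P}^N$ of bounded degree. The final and deepest step upgrades this to genuine boundedness, and I would carry it out via Shokurov's theory of complements. The goal is to prove the existence of a uniform integer $N=N(n,\epsilon)$ such that every $(X,\Delta)$ admits an $N$-complement, namely a boundary $\Delta^+\geqslant\Delta$ with $N(K_X+\Delta^+)\sim 0$ and $(X,\Delta^+)$ log canonical. Boundedness of complements simultaneously bounds the Cartier index of $K_X$ and controls the singularities of the general member of $|-NK_X|$, which is exactly what converts a birationally bounded family into an honestly bounded one and makes a fixed multiple of $-K_X$ very ample. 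Proving boundedness of complements in this generality requires a delicate inductive package weaving together the minimal model program, adjunction to non-klt centers, and the lower-volume bound of the previous paragraph; I regard it, jointly with uniform effective birationality, as the main obstacle to a complete proof.
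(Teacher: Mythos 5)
You were asked to compare your attempt against the paper's proof, but the crucial point is that the paper contains no proof of this statement: it is labeled a \emph{Conjecture} (Borisov--Alexeev--Borisov) and appears purely as motivation, with the paper explicitly remarking that it ``is still open in dimension three and higher.'' What the paper actually proves is only the two-dimensional volume bound of Theorem~\ref{main thm} (a consequence, via Conjecture~\ref{conj2}, of BAB, not the conjecture itself), and it does so by a completely different and elementary method: passing to the minimal resolution, classifying smooth weak log del Pezzo surfaces via Lemma~\ref{rat lem} as $\mathbb{P}^2$ or blow-ups of Hirzebruch surfaces $\mathbb{F}_n$ with $n\leqslant 2/\epsilon$, and running explicit intersection-theoretic estimates. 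So your text must stand on its own as a proof of BAB in all dimensions --- and it does not.

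What you have written is a research program, not a proof: every load-bearing assertion is deferred rather than established. The uniform upper bound on the volume in dimension $n$ is only ``expected by the same mechanism''; the uniform lower bound and effective birationality are to be obtained ``through multiplier ideals and Nadel vanishing,'' with the admission that making $m$ uniform across the family ``is the technical heart of the matter''; and the passage from birational boundedness to genuine boundedness rests on boundedness of $N$-complements, which you yourself flag as ``the main obstacle to a complete proof.'' Each of these is a major theorem in its own right --- this is essentially the decomposition Birkar later carried out, with uniform effective birationality and boundedness of complements each occupying a long and difficult paper --- and nothing in your proposal discharges any of these obligations. Moreover, even your reduction step is incomplete as stated: an upper and a lower volume bound together with birational boundedness do not by themselves yield very ampleness of $-mK_X$ for a uniform $m$; one needs the bounded Cartier index and singularity control that complements are supposed to supply, so your steps (i)--(iii) are not independent but entangled through the hardest one. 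The skeleton you describe is the historically correct one, and your side remarks (e.g.\ that mere klt-ness makes the statement false, as the paper's own cones $\mathrm{PC}_n$ illustrate) are sound, but as submitted the proposal proves none of its key claims and cannot be accepted as a proof of the conjecture.
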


The B-A-B Conjecture is still open in dimension three and higher. We
are mainly interested in the following weak conjecture for
anti-canonical volumes which is a consequence of B-A-B Conjecture.

\begin{conj}[Boundedness of anti-canonical volumes]\label{conj2}
Fix $0<\epsilon<1$, an integer $n>0$, and consider the set of all
$n$-dimensional $\epsilon$-klt log $\mathbb{Q}$-Fano varieties $(X,
\Delta)$. The volume ${\rm Vol}(-(K_X+\Delta))=(-(K_X+\Delta))^n$ is
bounded from above by a fixed number $M(n,\epsilon)$ depending only
on $n$ and $\epsilon$.
\end{conj}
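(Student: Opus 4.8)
The plan is to prove Conjecture \ref{conj2} by induction on the dimension $n$. The base cases are $n=1$, where $X\cong\mathbb{P}^1$ forces $\mathrm{Vol}(-(K_X+\Delta))\le 2$, and $n=2$, which is supplied by Theorem \ref{main thm}: an ample $-(K_X+\Delta)$ is in particular nef and big, so every $\epsilon$-klt log $\mathbb{Q}$-Fano surface is an $\epsilon$-lc weak log del Pezzo surface and its volume is bounded by the explicit constant there. As a preliminary reduction I would pass to a small $\mathbb{Q}$-factorialization, which changes neither the volume nor the discrepancies; thus we may assume $X$ is $\mathbb{Q}$-factorial and write $A:=-(K_X+\Delta)$, a nef and big $\mathbb{Q}$-divisor.

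For the inductive step I would argue by contradiction, assuming a sequence of $n$-dimensional $\epsilon$-klt pairs $(X_i,\Delta_i)$ with $v_i:=\mathrm{Vol}(A_i)\to\infty$. The first key step is a singularity-creation estimate: because $A_i$ is big with large volume, an asymptotic Riemann--Roch computation together with Kawamata--Viehweg vanishing produces, through a general point $x\in X_i$, an effective $\mathbb{Q}$-divisor $D_x\sim_{\mathbb{Q}}\lambda_i A_i$ with $\lambda_i\le C(n)\,v_i^{-1/n}\to 0$ such that $(X_i,\Delta_i+D_x)$ is not klt at $x$. Hence for large $i$ one can create nonklt centers using an arbitrarily small multiple of $A_i$.

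The heart of the argument is then to cut down the dimension. After perturbing and tie-breaking I would arrange a minimal nonklt center $W_x\subsetneq X_i$ through $x$, so that $d:=\dim W_x<n$, and apply Kawamata subadjunction to obtain a boundary $\Theta_x$ making $(W_x,\Theta_x)$ klt of log Fano type, with $-(K_{W_x}+\Theta_x)$ nef and big and comparable to $(1-\lambda_i)\,A_i|_{W_x}$. The inductive hypothesis bounds $\mathrm{Vol}(-(K_{W_x}+\Theta_x))$, hence the restricted volume $(A_i|_{W_x})^{d}$, by a constant depending only on $d$ and a uniform $\epsilon'$. Iterating the construction so as to separate two general points and descending through these bounded-volume centers then contradicts $v_i\to\infty$, which closes the induction and yields an explicit, far from optimal, $M(n,\epsilon)$.

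The main obstacle is precisely the adjunction step. Under restriction to the nonklt center the boundary acquires new coefficients governed by the adjunction formula, and these need not obey the original $\epsilon$-bound; one must prove that the restricted pairs $(W_x,\Theta_x)$ are uniformly $\epsilon'$-klt for some $\epsilon'=\epsilon'(n,\epsilon)>0$, that the minimal center is genuinely lower-dimensional rather than all of $X_i$, and that it varies in a bounded family as $x$ moves. Securing this uniform control of discrepancies under adjunction is exactly what demands the ACC for log canonical thresholds and the global ACC, and in dimension $\ge 3$ it is the deep input that keeps the conjecture open; the present article establishes only the surface case $n=2$ through Theorem \ref{main thm}.
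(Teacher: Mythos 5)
The statement you were asked about is a conjecture: the paper does not prove it, and explicitly records that it remains open in dimension four and higher; within the paper only the two-dimensional case is established (Theorem \ref{main thm}), by explicit analysis of the minimal resolution and birational morphisms to $\mathbb{P}^2$ or $\mathbb{F}_n$ --- not by the non-klt-center machinery you outline. Your base cases are fine ($n=1$ trivially; $n=2$ by Theorem \ref{main thm}, since $\epsilon$-klt implies $\epsilon$-lc and ample implies nef and big), and the small $\mathbb{Q}$-factorialization reduction is harmless. But the inductive step has a genuine gap at its heart, which you yourself flag as an ``obstacle'' rather than resolve --- so what you have written is a strategy sketch of a known (and known-to-be-hard) approach, not a proof.

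Concretely: (i) after tie-breaking, Kawamata subadjunction on the minimal non-klt center $W_x$ produces a klt pair $(W_x,\Theta_x)$, but subadjunction gives no lower bound on the log discrepancies of $(W_x,\Theta_x)$ in terms of $\epsilon$: the coefficients of $\Theta_x$ come from the moduli part and from the singularities of $(X_i,\Delta_i+D_x)$ along $W_x$, and can approach $1$ even when $(X_i,\Delta_i)$ is $\epsilon$-klt. Without a uniform $\epsilon'=\epsilon'(n,\epsilon)>0$ making the restricted pairs $\epsilon'$-klt, the inductive hypothesis simply cannot be invoked; securing that uniform control is essentially equivalent to ACC-type and boundedness-of-complements results that are much deeper than anything in this paper, so the induction does not close. (ii) Even granting bounded restricted volumes $(A_i|_{W_x})^{d}$, your concluding step --- iterating to separate two general points and ``descending through bounded-volume centers'' to contradict $v_i\to\infty$ --- is only asserted; making it precise requires a covering-family count or a lifting argument that you do not supply, and note also that the centers carry only a nef and big anti-log-canonical class, whereas your inductive hypothesis is stated for log $\mathbb{Q}$-Fano pairs. (iii) Your singularity-creation step rests on Kawamata--Viehweg vanishing, hence on characteristic zero, while the paper works over an algebraically closed field of arbitrary characteristic --- this is precisely why the paper's own two-dimensional proof avoids such methods (it notes the same restriction applies to Lai's approach). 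In short, the attempt correctly identifies the standard strategy and correctly identifies why it fails in dimension $\geqslant 3$, but it does not prove the conjecture, and no comparison with a proof in the paper is possible because the paper proves only the surface case.
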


In dimension two, Conjecture \ref{conj2} is well-researched in
history. Alexeev establishes two dimensional B-A-B Conjecture and the boundedness for the anti-canonical volumes  in \cite{A}, but no clear bound is written down. Alexeev and Mori in \cite{AM} give a simplified argument for two dimensional B-A-B Conjecture and give an upper bound for the pair
$(X, \Delta)$ which is
$$
(K_X+\Delta)^2\leqslant (\lfloor 2/\epsilon \rfloor+ 2)^2.
$$
Recently Lai in \cite{Lai} gives an upper bound using 
covering families of tigers of M$^c$Kernan and Connectedness Lemma of Koll\'{a}r and Shokurov, which turns out to be
$$
(K_X+\Delta)^2\leqslant \max\bigg\{64, \frac{8}{\epsilon}+4\bigg\}.
$$
Here we remark that one could use Lai's method to get a refinement
of his bound by carefully computation, which is
$$
(K_X+\Delta)^2\leqslant \max\bigg\{36, \lfloor 2/\epsilon \rfloor+
4+\frac{4}{\lfloor 2/\epsilon \rfloor}\bigg\}.
$$
This bound is very close to be optimal. But we should also remark that
Lai's method only works for complex number field and rational
boundary and the method used in this article is totally different
from Lai's method.

For dimension three, recently in \cite{Lai}, Lai proves the
following theorem:
\begin{thm}[\cite{Lai}, Theorem B]
Let $(X, \Delta)$ be an $\epsilon$-klt $\mathbb{Q}$-factorial log
$\mathbb{Q}$-Fano threefold of $\rho(X)=1$. The degree $-K_X^3$
satisfies
$$
-K_X^3\leqslant
\Big(\frac{24M(2,\epsilon)R(2,\epsilon)}{\epsilon}+12\Big)^3,
$$
where $R(2, \epsilon)$ is an upper bound of the Cartier index of
$K_S$ for S any $\epsilon/2$-klt log del Pezzo surface of $\rho(S) =
1$ and $M(2, \epsilon)$ is an upper bound of the volume Vol$(-K_S) =
K_S^2$ for S any $\epsilon/2$-klt log del Pezzo surface of $\rho(S)
= 1$.
\end{thm}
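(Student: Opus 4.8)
The plan is to convert the anti-canonical volume into a lower bound for the coefficient of a \emph{tiger} (a small anti-canonical multiple whose associated pair fails to be klt), using M$^{\rm c}$Kernan's covering families of tigers together with the Koll\'ar--Shokurov Connectedness Lemma, and to extract from the tiger a del Pezzo surface whose volume and Cartier index are governed by $M(2,\epsilon)$ and $R(2,\epsilon)$. Writing $v=-K_X^3$, the two inequalities I aim for are an upper estimate $\lambda\lesssim 3v^{-1/3}$ for the cheapest tiger coefficient (from Riemann--Roch) and a lower estimate $\lambda\gtrsim \epsilon/(8M(2,\epsilon)R(2,\epsilon))$ (from the surface bounds); together they force $v^{1/3}\le \frac{24M(2,\epsilon)R(2,\epsilon)}{\epsilon}+12$, which is exactly the asserted bound after cubing.

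First I would fix a very general point $x\in X$; since $X$ is $\epsilon$-klt its smooth locus is a big open set, so $x$ may be taken smooth on $X$. By asymptotic Riemann--Roch and the Kawamata--Viehweg vanishing available over $\mathbb{C}$, $h^0(X,-mK_X)=\frac{v}{6}m^3+O(m^2)$, while vanishing to order $t$ at the smooth point $x$ imposes $\binom{t+2}{3}$ conditions. Solving $\frac{v}{6}m^3>\binom{t+2}{3}$ produces a divisor $D\in|-mK_X|$ with $\operatorname{mult}_xD\ge t$ for $t$ of size $v^{1/3}m$, whence a $\mathbb{Q}$-divisor $D'\sim_{\mathbb{Q}}-\lambda K_X$ with $(X,D')$ not klt at $x$ and $\lambda$ of size $3v^{-1/3}$; the lower-order terms in Riemann--Roch are what produce the additive $+12$ in the final estimate. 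Letting $x$ vary gives a covering family of tigers.

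Next, applying the tie-breaking trick to $D'$ I would arrange a log canonical pair with a unique minimal non-klt centre $Z_x$ through $x$, and the Connectedness Lemma applied to the nef and big $-(K_X+D')$ would control $Z_x$; running $x$ over $X$, the $Z_x$ sweep out $X$. The core of the argument is the reduction to the case where $Z_x$ is a divisor $S$: when $Z_x$ is a point or a curve, I would use the covering family, together with adjunction along the family, to produce an actual non-klt divisor $S\sim_{\mathbb{Q}}-\lambda K_X$. Kawamata subadjunction then writes $K_S+\Delta_S=(K_X+D')|_S+\operatorname{Diff}$, exhibiting $(S,\Delta_S)$ as an $\epsilon/2$-klt log del Pezzo surface of Picard number one (the loss from $\epsilon$ to $\epsilon/2$ coming from the boundary created in adjunction), so that $(-K_S)^2\le M(2,\epsilon)$ and the Cartier index of $K_S$ divides the quantity bounded by $R(2,\epsilon)$; the latter controls the denominators in $\operatorname{Diff}$ and hence the discrepancy between $-K_X|_S$ and $-K_S$.

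Finally I would turn these surface bounds into the desired lower bound for $\lambda$. Because $\rho(X)=1$ the divisor $S$ is proportional to $-K_X$, so all intersection numbers on $S$ are determined by $\lambda$, by $-K_X^3$, and by the different, whose denominators are bounded by $R(2,\epsilon)$; pinning $(-K_S)^2$ against the bound $M(2,\epsilon)$ then forces $\lambda\ge \epsilon/(8M(2,\epsilon)R(2,\epsilon))$, i.e.\ a tiger cannot be made arbitrarily cheap. Comparing with the Riemann--Roch estimate $\lambda\le 3v^{-1/3}$ yields $v^{1/3}\le \frac{24M(2,\epsilon)R(2,\epsilon)}{\epsilon}+12$ and hence the theorem. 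I expect the genuine obstacle to be the middle step: controlling the minimal non-klt centre uniformly across the covering family and reducing the $0$- and $1$-dimensional cases to a divisor while keeping the adjunction pair $\epsilon/2$-klt, since it is only through the surface $(S,\Delta_S)$ that the invariants $M(2,\epsilon)$ and $R(2,\epsilon)$ enter, and the precise constants $24$ and $12$ hinge on carrying the Riemann--Roch error terms and the different through this reduction.
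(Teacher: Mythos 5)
You should first be aware that the paper contains no proof of this statement at all: it is quoted verbatim from \cite{Lai} (Theorem B there) purely as motivation, and the paper's own contribution is the two-dimensional optimal bound, proved by a completely different method (passing to the minimal resolution and doing explicit intersection theory on Hirzebruch surfaces). Indeed the author explicitly remarks that the tiger/Connectedness approach is Lai's, works only over the complex numbers with rational boundary, and is ``totally different'' from the method of this article. So your proposal cannot be checked against an in-paper proof; it has to be judged against Lai's cited argument, whose overall skeleton your outline does reproduce correctly: a Riemann--Roch count producing a ``cheap tiger'' $D'\sim_{\mathbb{Q}}-\lambda K_X$ with $\lambda\lesssim 3v^{-1/3}$ for $v=-K_X^3$, a lower bound on $\lambda$ extracted through an $\epsilon/2$-klt log del Pezzo surface of Picard number one (which is exactly why the constants $M(2,\epsilon)$ and $R(2,\epsilon)$ are defined for such surfaces), and the comparison of the two bounds to get $v^{1/3}\leqslant 24M(2,\epsilon)R(2,\epsilon)/\epsilon+12$.

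As a proof, however, your text has genuine gaps precisely at the step you yourself flag as ``the genuine obstacle,'' and that step is the actual content of the theorem, not a technicality. When the minimal non-klt centre is a point or a curve you assert that the covering family ``produces an actual non-klt divisor $S\sim_{\mathbb{Q}}-\lambda K_X$,'' but nothing in your sketch performs this reduction; this is where M$^{\rm c}$Kernan's covering-family machinery, the Connectedness Lemma, and the hypothesis $\rho(X)=1$ must all be deployed, and without it the case analysis collapses. Similarly, your claim that Kawamata subadjunction directly ``exhibits $(S,\Delta_S)$ as an $\epsilon/2$-klt log del Pezzo surface of Picard number one'' is unjustified as stated: the relevant surface is obtained by extracting the lc place on a birational model and one must argue (using $\rho(X)=1$ and a relative contraction) why the resulting surface has $\rho(S)=1$, and why the discrepancy loss is exactly from $\epsilon$ to $\epsilon/2$ rather than something uncontrolled. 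Finally, the key inequality $\lambda\geqslant \epsilon/(8M(2,\epsilon)R(2,\epsilon))$ is announced rather than derived; the mechanism by which $R(2,\epsilon)$ enters --- the Cartier index bound forcing nonzero intersection numbers on $S$ to be at least $1/R(2,\epsilon)$, pinned against $(-K_S)^2\leqslant M(2,\epsilon)$ --- requires an explicit computation you never carry out, and the additive constant $12$ does not fall out of ``Riemann--Roch error terms'' without actually tracking them. In short: your roadmap is faithful to Lai's strategy as the paper describes it, but every quantitative step that produces the stated constants is missing, so this is an outline of the cited proof rather than a proof.
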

This is another motivation to find an optimal bound in dimension
two. Recently the author is informed by Lai that the assumption on
$\rho(X)$ in the above theorem is removed. Hence Conjecture \ref{conj2} is solved in dimension three. But it remains open in dimension four and higher.

In this article, we solve Conjecture \ref{conj2} in dimension two by
giving an optimal bound.

In the proof of Theorem \ref{main thm}, we note that when
$\rho({X_\textrm{min}})$ increases, the upper bound of
$\textrm{Vol}(-(K_X+\Delta))$ decreases (cf. Remark \ref{rem1}),
where ${X_\textrm{min}}$ is the minimal resolution of $(X, \Delta)$.
So we consider the relation between $\rho({X_\textrm{min}})$ and the
upper bound of $\textrm{Vol}(-(K_X+\Delta))$. We give optimal bounds
for the cases when $\rho({X_\textrm{min}})$ gets larger.

\begin{thm}\label{r3}
Let $(X,\Delta)$ be an $\epsilon$-lc weak log del Pezzo surface with
$\rho({X_\textrm{min}})\geqslant 3$. Then the anti-canonical volume
$\textrm{Vol}(-(K_X+\Delta))=(K_X+\Delta)^2$ satisfies
$$
(K_X+\Delta)^2\leqslant \lfloor 2/\epsilon \rfloor+
3+\frac{4}{\lfloor 2/\epsilon \rfloor},
$$
which is optimal.
\end{thm}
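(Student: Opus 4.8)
The plan is to reduce to a smooth surface, lower the Picard number by a minimal model program while tracking the volume, and then feed into the bound already available for Picard number at most two, squeezing out the extra $-1$ from the presence of at least one blow-up.

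First I would pass to the minimal resolution $\pi\colon Y=X_{\textrm{min}}\to X$ and take the crepant pullback $K_Y+B=\pi^*(K_X+\Delta)$, where $B=\pi^{-1}_*\Delta+\sum(-a_i)F_i$. Because $(X,\Delta)$ is $\epsilon$-lc one has $-a_i\leqslant 1-\epsilon$, and because the resolution is minimal one has $a_i\leqslant 0$, so $B$ has coefficients in $[0,1]$. Thus $(Y,B)$ is a \emph{smooth} $\epsilon$-lc weak log del Pezzo surface with $\rho(Y)=\rho(X_{\textrm{min}})\geqslant 3$ and $(K_Y+B)^2=(K_X+\Delta)^2$, so it suffices to bound the volume in the smooth case. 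Here $Y$ is a smooth rational surface, and since $\rho(Y)\geqslant 3$ it is neither $\mathbb{P}^2$ nor a Hirzebruch surface, hence it carries $(-1)$-curves.

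Next I would run the $(K_Y+B)$-MMP. As $-(K_Y+B)$ is nef and big, $K_Y+B$ is not nef, so the MMP contracts a chain of $(-1)$-curves $E_1,\dots,E_k$ with $(K+B)\cdot E_i<0$, ending at a Mori fibre space $(Y',B')$ with $\rho(Y')\in\{1,2\}$. Two facts drive this: discrepancies do not decrease along the MMP, so every intermediate pair stays $\epsilon$-lc and $-(K+B)$ stays nef and big; and the contraction of one $(-1)$-curve $E$ gives
$$
(K_Y+B)^2=(K_{Y'}+B')^2-c^2,\qquad c=-(K+B)\cdot E=1-B\cdot E\in(0,1].
$$
Iterating yields $(K_Y+B)^2=(K_{Y'}+B')^2-\sum_i c_i^2$, and because $\rho(Y)\geqslant 3>\rho(Y')$ there is at least one contraction, so $k\geqslant 1$ (and $k\geqslant 2$ if $Y'=\mathbb{P}^2$). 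For the minimal model I would invoke the case analysis of Theorem~\ref{main thm}, giving $(K_{Y'}+B')^2\leqslant \max\{9,\,n+4+\tfrac4n\}$ with $n=\lfloor 2/\epsilon\rfloor$, \emph{together with} its sharp description of the equality pairs $(\mathbb{P}^2,0)$ and $(\mathbb{F}_n,(1-\tfrac2n)S_n)$.

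The heart of the matter, and the step I expect to be the main obstacle, is to turn ``at least one blow-up'' into the exact loss of $1$, i.e. to prove $(K_{Y'}+B')^2-\sum_i c_i^2\leqslant n+3+\tfrac4n$. The naive subtraction is too weak precisely when all $c_i$ are near $0$, that is when the contracted boundary $B'$ is highly concentrated at the blow-up centres; so the argument has to be a rigidity statement near the extremal configurations. Concretely, I would show that if $(K_{Y'}+B')^2$ lies within $1$ of its maximum then $(Y',B')$ is forced close to $(\mathbb{F}_n,(1-\tfrac2n)S_n)$, whose unique $-(K+B')$-trivial curve is the negative section $S_n$; a centre lying on $S_n$ makes the strict transform of $S_n$ anti-nef after blow-up, contradicting nefness of $-(K+B)$, while a centre off $S_n$ has boundary multiplicity $0$, forcing $c=1$ and hence $\sum_i c_i^2\geqslant 1$ (and the $\mathbb{P}^2$ branch is handled the same way using $k\geqslant 2$). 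Optimality then comes from this very boundary case: blowing up $(\mathbb{F}_n,(1-\tfrac2n)S_n)$ at a general point $q\notin S_n$, with exceptional curve $E$ under the blow-up $\sigma$, keeps $-(K+B)=\sigma^*\big(-(K_{\mathbb{F}_n}+(1-\tfrac2n)S_n)\big)-E$ nef and the pair $\epsilon$-lc with $\rho=3$, and gives $(K+B)^2=(n+4+\tfrac4n)-1=n+3+\tfrac4n$.
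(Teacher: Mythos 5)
Your overall skeleton---minimal resolution, contraction down to a minimal rational surface, and playing the loss $\sum_i c_i^2$ against the $\rho\leqslant 2$ bound of Theorem \ref{main thm}---is the same reduction the paper makes (via Remark \ref{rem1} and Lemma \ref{rat lem} it reduces to $\rho(X_{\textrm{min}})=3$, i.e.\ a single blow-up $\phi\colon X\to\mathbb{F}_n$ at a point $p\notin S_n$). But there is a genuine gap exactly at the step you yourself flag as the heart of the matter: you never prove the quantitative statement that ``deficit plus loss'' is at least $1$, i.e.\ that $(K_{Y'}+B')^2-c^2\leqslant n+3+\frac{4}{n}$ for \emph{every} anti-nef pair $(Y',B')$ and every blow-up centre $q$. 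Your rigidity argument is only carried out for the exact extremal pairs $(\mathbb{F}_n,(1-\frac{2}{n})S_n)$ and $(\mathbb{P}^2,0)$: only there does ``centre off $S_n$'' force $\mbox{mult}_q(B')=0$ and hence $c=1$. For a non-extremal $B'$ the relevant quantity is $c=1+b-\mbox{mult}_q(B')$ (with $b\geqslant 0$ the coefficient of the exceptional curve in the boundary upstairs), and $\mbox{mult}_q(B')$ can be anything in $[0,1)$, making $c$ arbitrarily small. What is needed is a continuous trade-off: the heavier $B'$ is at $q$, the further $(K_{Y'}+B')^2$ must drop below its maximum, at a definite rate. ``Forced close to'' the extremal pair gives no such rate, and no compactness principle supplies one for free, since both the deficit and $c$ vary continuously with configurations involving unboundedly many boundary components.

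That missing trade-off is precisely the content of the paper's computation: writing $\overline{\Delta}=aS_n+\sum d_i\overline{\Delta}_i$ with $\overline{\Delta}_1=F_p$ the fiber through the centre, effectivity of $\Delta$ gives $\mbox{mult}_p(\overline{\Delta})\geqslant 1-e$ (inequality (\ref{mult4})), and intersecting with $F_p$ yields $\sum d_i\alpha_i\geqslant \max\{0,1-e-d_1\}$ and $\sum d_i\beta_i\geqslant d_1$ (inequalities (\ref{a4}) and (\ref{b4})); substituting into $(K_X+\Delta)^2=(K_{\mathbb{F}_n}+\overline{\Delta})^2-e^2$ and optimizing jointly over $e$, $d_1$ and $a$ shows the maximum is exactly $n+3+\frac{4}{n}$, attained at $e=1$, $d_1=0$, $a=\max\{0,1-\frac{2}{n}\}$. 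This explicit joint optimization is what your sketch would have to reproduce, so the proposal as written is incomplete. Three smaller points: your claim $c_i\in(0,1]$ can fail when the contracted $(-1)$-curve lies in $\mbox{Supp}(B)$ (harmless for the direction of the inequality, but incorrect as stated); a centre on $S_n$ is not in general a contradiction---the paper legitimizes the assumption $p\notin S_n$ via Proposition \ref{blowupposition} rather than by excluding such centres; and the nefness claim in your optimality example is asserted rather than proved, this being the content of Theorem \ref{example2}(1) with $k=1\leqslant n+2$.
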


\begin{thm}\label{r4}
Let $(X,\Delta)$ be an $\epsilon$-lc weak log del Pezzo surface with
$\rho({X_\textrm{min}})\geqslant 4$. Then the anti-canonical volume
$\textrm{Vol}(-(K_X+\Delta))=(K_X+\Delta)^2$ satisfies
$$
(K_X+\Delta)^2\leqslant \max\bigg\{\lfloor 2/\epsilon \rfloor+
2+\frac{4}{\lfloor 2/\epsilon \rfloor}, \lfloor
(3+\epsilon)/2\epsilon \rfloor+\frac{5}{2}+\frac{9}{4\lfloor
(3+\epsilon)/2\epsilon \rfloor-2} \bigg\},
$$
which is optimal.
\end{thm}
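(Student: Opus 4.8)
\section*{Proof proposal for Theorem \ref{r4}}

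The plan is to reduce to the minimal resolution, normalize by running the MMP to a Hirzebruch minimal model, and then carry out a discrete optimization over the admissible blow-up configurations, exactly as one sets up the proofs of Theorem \ref{main thm} and Theorem \ref{r3}, but pushing the bookkeeping one contraction further. Let $\mu\colon Y=X_{\textrm{min}}\to X$ be the minimal resolution and put $L:=-\mu^*(K_X+\Delta)$, a nef and big $\mathbb{R}$-divisor on the smooth rational surface $Y$. Writing $-K_Y=L+B$, the $\epsilon$-lc hypothesis says precisely that $B\geq 0$ and every coefficient of $B$ is at most $1-\epsilon$; moreover $(K_X+\Delta)^2=L^2$ and $\rho(Y)=\rho(X_{\textrm{min}})\geq 4$. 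So it suffices to bound $L^2$ for such a triple $(Y,L,B)$.

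Since $-K_Y=L+B$ is big, $Y$ is a smooth rational surface and the $K_Y$-MMP terminates at a minimal model $W=\mathbb{P}^2$ or $W=\mathbb{F}_m$; let $\psi\colon Y\to W$ be the resulting contraction, so that $K_Y^2=10-\rho(Y)\leq 6$ and $\psi$ blows down at least two curves. Write $L=\psi^*A-\sum_i m_iE_i$ with $A:=\psi_*L$ nef on $W$ and $m_i\geq 0$, so that $L^2=A^2-\sum_i m_i^2$. On $W=\mathbb{F}_m$, with $S_m$ the negative section and $F$ a fibre, put $A=\alpha S_m+\beta F$; the coefficient bound on the strict transform of $S_m$ forces $\alpha=A\cdot F\geq 1+\epsilon$, while nefness of $A$ forces $\beta\geq\alpha m$, hence $m\leq 2/\epsilon$, i.e. $m\leq\lfloor 2/\epsilon\rfloor$. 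Over this admissible region the maximum of $A^2$ equals $m+4+4/m$, attained precisely at the Hirzebruch configuration of Theorem \ref{main thm}, where $\beta=m+2$ and $B_W:=\psi_*B=(1-\tfrac2m)S_m$ has \emph{no} fibre component.

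The entire content of the theorem is a trade-off between two competing desires. Keeping $A^2$ at its maximum forces $B_W$ to have no high-multiplicity points, so \emph{every} contracted point $p_i$ satisfies $\operatorname{mult}_{p_i}(B_W)<1$; since $B\geq0$ then demands $m_i\geq 1$, one gets $\sum_i m_i^2\geq\rho(Y)-2\geq 2$ and hence $L^2\leq (m+4+4/m)-2\leq n+2+4/n$ with $n=\lfloor 2/\epsilon\rfloor$, which is the first term. To do better one must lower $\beta$ so that fibre components enter $B_W$ and create points (typically nodes $S_m\cap F_0$, of multiplicity up to $2-2\epsilon$) where one may contract cheaply, taking some $m_i$ small or zero; this saves on $\sum_i m_i^2$ at the cost of decreasing $A^2$. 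Optimizing $A^2-\sum_i m_i^2$ over these discrete data — the Hirzebruch index $m$, the coefficients of the fibre part of $B_W$, and the multiplicities $m_i$, all subject to the $\epsilon$-bound and to $\rho(Y)\geq 4$ — is exactly what produces the second quantity $\lfloor(3+\epsilon)/2\epsilon\rfloor+\tfrac52+\tfrac{9}{4\lfloor(3+\epsilon)/2\epsilon\rfloor-2}$, whose floor records the integrality of the optimal index and whose last term is the correction from the node multiplicities.

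The hard part is the special regime: bounding $\sum_i m_i^2$ from below against the coefficient constraint when $B$ contains long chains of curves or the contracted points are infinitely near, and then executing the discrete optimization cleanly enough to pin down the floor $\lfloor(3+\epsilon)/2\epsilon\rfloor$ and the $\tfrac{9}{4m-2}$ correction. I would also need to dispose of the $W=\mathbb{P}^2$ minimal model and of the degenerate subcases where $A\cdot F$ is small, checking that neither beats the two listed values. Finally, optimality would be proved by exhibiting the explicit maximizers — blow-ups of a Hirzebruch surface at points in suitable general (for the first term) respectively special (for the second term) position, in the spirit of the examples of the last section — and verifying that the discrete optima are genuinely attained so that equality can occur.
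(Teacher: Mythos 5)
Your setup is the same as the paper's: pass to the minimal resolution, contract to $\mathbb{P}^2$ or a Hirzebruch surface $\mathbb{F}_m$ with $m\leqslant \lfloor 2/\epsilon\rfloor$, write $L^2=A^2-\sum_i m_i^2$ with $m_i\geqslant 0$ by the Negativity Lemma, and optimize under the coefficient and nefness constraints. But the proposal stops exactly where the proof begins, and the step you defer is not routine bookkeeping; it needs an idea your outline does not contain. The paper first reduces to $\rho(X_{\textrm{min}})=4$ exactly (pushing forward to an intermediate model preserves all hypotheses and can only increase the volume, cf.\ Remark \ref{rem1}), so that only \emph{two} blow-ups occur — this already disposes of your worry about ``long chains of curves.'' It then splits into cases by the mutual position of the two (possibly infinitely near) centers: distinct fibers, same fiber, and $p_2\in E_1$. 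The second term of the max arises in the same-fiber case, and its mechanism is not the ``node multiplicities'' you invoke: it is the nefness of $-(K_X+\Delta)$ against the strict transform $F_p^{\prime\prime}$ of the common fiber, a $(-2)$-curve on $X$, which gives $a\leqslant 2d_1$ (inequality (\ref{5122})); combined with (\ref{5121}) this forces $d_1\geqslant \frac{n-2}{2n-1}$ and $a\geqslant\frac{2n-4}{2n-1}$, and then $a\leqslant 1-\epsilon$ caps the Hirzebruch index at $n\leqslant\lfloor(3+\epsilon)/2\epsilon\rfloor$ (see (\ref{512n})). It is this forced large boundary coefficient, fed back through the $\epsilon$-lc condition, that produces the floor $\lfloor(3+\epsilon)/2\epsilon\rfloor$, and the quadratic optimization at $d_1=a/2$, $a=\frac{2n-4}{2n-1}$ that produces the correction $\frac{9}{4n-2}$. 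You explicitly state you do not know how to pin down either; that is the gap.

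Two of your intermediate claims are also false as stated. First, effectivity of $B$ does not give $m_i\geqslant 1$ whenever $\mbox{mult}_{p_i}(B_W)<1$; it gives $m_i\geqslant 1-\mbox{mult}_{p_i}(B_W)$ (the paper's (\ref{mult51}), (\ref{mult52})), which equals $1$ only when the multiplicity is zero, i.e.\ only at the exact-maximum configuration. The entire difficulty is the intermediate regime, where these multiplicity bounds must be converted into lower bounds on the horizontal degree $\sum d_i\alpha_i$ by intersecting with the fibers through the centers (inequalities (\ref{511a}), (\ref{512a}), (\ref{522a})); when $p_2$ is infinitely near, this requires estimating $(\sum d_i\overline{\Delta}_i).F_{p_1}$ in two different ways and averaging (Subcase 2.2), a step your outline does not anticipate. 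Second, $A\cdot F\geqslant 1+\epsilon$ is not a valid constraint: $A\cdot F=2-B_W\cdot F$, and $B_W$ may contain horizontal components other than $S_m$ of large total degree, since only the coefficient of $S_m$ is bounded by $1-\epsilon$; the correct derivation of $m\leqslant 2/\epsilon$ uses nefness along $S_m$ together with $a\leqslant 1-\epsilon$, as in Lemma \ref{rat lem}. Your endpoint analysis of the two extreme configurations is consistent with the answer, but asserting that the trade-off between them interpolates to the stated maximum is precisely what the paper's case-by-case computation proves and what your proposal leaves unproved.
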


Since almost all smooth  weak log del Pezzo surfaces
come from blowing up the Hirzebruch surfaces (cf. Lemma \ref{rat
lem}), we consider the case when blowing up the Hirzebruch surfaces
at points in general position.

\begin{thm}[=Theorem \ref{general blowup}]
Let $X$ be a smooth surface which is given by blowing up $k$ points
on $\mathbb{F}_n$. Assume that these points are not on $S_n$, and no
two of them are on the same fiber of $\mathbb{F}_n \rightarrow \mathbb{P}^1$. Assume that $-(K_X+\Delta)$ is
nef and big, where $\Delta$ is an effective $\mathbb{R}$-divisor (not neccesary a boundary). Then the anti-canonical volume
${\rm Vol}(-(K_X+\Delta))=(K_X+\Delta)^2$ satisfies
$$
(K_X+\Delta)^2\leqslant\begin{cases}
n+4+\frac{4}{n}-k & \mbox{if } n\geqslant 2;\\
8-k  & \mbox{if } n=0,1,
\end{cases}
$$
which is optimal if the points are in general position.
\end{thm}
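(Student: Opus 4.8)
The plan is to work on the given blow-up $\pi\colon X\to\mathbb{F}_n$, with exceptional curves $e_1,\dots,e_k$ over $p_1,\dots,p_k$, and to split the self-intersection of $L:=-(K_X+\Delta)$ into a contribution from $\mathbb{F}_n$ and a contribution from the exceptional curves. Writing $s=\pi^{*}S_n$ and $f=\pi^{*}F$, one has $K_X=-2s-(n+2)f+\sum_i e_i$ and $K_X^2=8-k$. Setting $m_i:=L\cdot e_i$ and $\bar L:=\pi_{*}L$, the identity $L=\pi^{*}\bar L-\sum_i m_ie_i$ yields
$$L^2=\bar L^2-\sum_{i=1}^{k}m_i^2 .$$
Since $L$ is nef, $m_i\geq 0$; since the $p_i$ avoid $S_n$, the strict transform of $S_n$ is exactly $s$, so $\bar L\cdot S_n=L\cdot s\geq 0$, and together with $\bar L\cdot F=L\cdot f\geq 0$ this shows $\bar L$ is a \emph{nef} class on $\mathbb{F}_n$. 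Writing $\bar L=\alpha S_n+\gamma F$, nefness together with $\bar L=-K_{\mathbb{F}_n}-\pi_{*}\Delta$ and $\pi_*\Delta\geq 0$ give $0\leq\alpha\leq 2$ and $n\alpha\leq\gamma\leq n+2$.

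The coupling between $\bar L$ and the $m_i$ comes from $\Delta\geq 0$. For each $i$ the hypothesis that no two points share a fiber makes $\widetilde F_i:=f-e_i$ an irreducible $(-1)$-curve, so $L\cdot\widetilde F_i\geq 0$ gives $m_i\leq\alpha$. More importantly, expanding $\Delta=-K_X-L$ in the basis $\{s,f,e_i\}$ and using that $\{s,\,e_i,\,f-e_i\}$ generate the effective cone of $X$ (for the relevant range of $k$ and general position of the $p_i$; this is where ``off $S_n$'' and ``distinct fibers'' keep the cone small), effectivity of $\Delta$ becomes exactly
$$\sum_{i=1}^{k}\max(0,\,1-m_i)\;\leq\; n+2-\gamma .$$
It then suffices to maximize $\alpha(2\gamma-n\alpha)-\sum_i m_i^2$ over this region. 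For fixed $\alpha$ and $M:=n+2-\gamma$, the minimum of $\sum_i m_i^2$ subject to $\sum_i\max(0,1-m_i)\leq M$ is, by convexity and Jensen's inequality, equal to $(k-M)^2/k$ when $M\leq k$ (attained at all $m_i=(k-M)/k$) and $0$ otherwise; meanwhile $\alpha(2\gamma-n\alpha)$ is maximized in $\alpha\in[0,2]$ at $\alpha=(n+2-M)/n$ (admissible precisely when $n\geq 2$), giving $(n+2-M)^2/n$. Hence
$$L^2\;\leq\;\frac{(n+2-M)^2}{n}-\frac{(k-M)^2}{k},$$
and a short one-variable check shows the maximum over $M\in[0,\min(k,n+2)]$ is attained at $M=0$, with value $\tfrac{(n+2)^2}{n}-k=n+4+\tfrac4n-k$. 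For $n=0,1$ the unconstrained optimum $\alpha=(n+2)/n$ exceeds $2$, so the constraint $\alpha\leq 2$ binds, $\bar L^2\leq 8$, and the forced estimate $m_i\geq 1$ (from $M=0$) gives $L^2\leq 8-k$.

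I expect the crux to be the effectivity step, i.e.\ establishing $\sum_i\max(0,1-m_i)\leq n+2-\gamma$ rigorously by controlling the multiplicities of $\pi_{*}\Delta$ at the $p_i$. This is transparent when $\{s,e_i,f-e_i\}$ span the effective cone, but for special configurations extra negative curves can appear; one must then check that each such curve $C$ contributes a compensating nef inequality $L\cdot C\geq 0$, and it is precisely here that the hypotheses that the $p_i$ lie off $S_n$ and on distinct fibers do the real work, while everything else is bookkeeping on $\mathbb{F}_n$. Finally, optimality for general $p_i$ is witnessed by taking $\Delta=\tfrac{n-2}{n}\,\widetilde{S_n}$ when $n\geq 2$ (so that $M=0$, each $m_i=1$, and $\bar L=\tfrac{n+2}{n}S_n+(n+2)F$ is nef with $\bar L\cdot S_n=0$), and by $\Delta=0$ when $n=0,1$; in each case $-(K_X+\Delta)$ is nef and big with $(K_X+\Delta)^2$ equal to the stated bound, and generality guarantees that no further curve obstructs nefness.
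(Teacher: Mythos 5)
Your framework is essentially the paper's own, in different clothes: your $m_i=L\cdot e_i$ are exactly the coefficients $e_j\geqslant 0$ that the paper extracts from the Negativity Lemma, nefness of $\bar L$ is the paper's anti-nefness of $K_{\mathbb{F}_n}+\pi_*\Delta$, and $L^2=\bar L^2-\sum_i m_i^2$ is the paper's starting identity. The genuine gap is precisely the step you yourself flag as the crux: the inequality $\sum_i\max(0,1-m_i)\leqslant n+2-\gamma$. You derive it from the assertion that $\{s,\,e_i,\,f-e_i\}$ span the effective cone of $X$, but that assertion fails whenever the $p_i$ lie on an irreducible curve of low degree other than a fiber, and, contrary to your hope that such extra curves only contribute ``compensating nef inequalities,'' in that situation your inequality is simply false --- and so is the stated bound. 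The hypotheses ``off $S_n$'' and ``distinct fibers'' cannot prevent this; a genuine general-position hypothesis is needed for the inequality itself, not only for optimality.

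Concretely, take $n=2$, let $C\in|h|$ be a smooth irreducible curve disjoint from $S_2$ (the pullback of a general hyperplane section of the quadric cone), and let $p_1,\dots,p_5\in C$ be distinct points; since $C\cdot f=1$ they lie on distinct fibers and avoid $S_2$. On the blow-up $\phi:X\to\mathbb{F}_2$ with exceptional curves $E_1,\dots,E_5$, put $\Delta=\frac13C'$, where $C'$ is the strict transform of $C$, so that
$$
L:=-(K_X+\Delta)\equiv \tfrac53\,\phi^*h-\tfrac23\,(E_1+\cdots+E_5).
$$
Then $L\cdot E_j=\frac23$, $L\cdot C'=0$, $L\cdot S_2'=0$, and for any irreducible $\Gamma\neq C$ on $\mathbb{F}_2$ one has $\sum_j\mbox{mult}_{p_j}\Gamma\leqslant \Gamma\cdot C=h\cdot\Gamma$, hence $L\cdot\Gamma'\geqslant h\cdot\Gamma\geqslant 0$; so $L$ is nef, and $L^2=\frac{50}{9}-\frac{20}{9}=\frac{10}{3}>0$, so $L$ is also big. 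Here $m_j=\frac23$ and $\gamma=\frac{10}{3}$, so $\sum_j\max(0,1-m_j)=\frac53>\frac23=n+2-\gamma$, and indeed $L^2=\frac{10}{3}$ exceeds the claimed bound $n+4+\frac4n-k=3$. So the effectivity step cannot be repaired for arbitrary configurations. You are in good company: the paper's own proof breaks at the corresponding spot, namely the claim that its bound, viewed as a function of the $e_j$, is maximized at $e_j=1-d_j$; this fails in the example above because the paper's multiplicity bookkeeping through fibers produces a maximum over $j$ rather than a sum, and hence cannot see a single low-degree curve passing through all $k$ points, whereas the gain $\sum_j\bigl((1-d_j)^2-e_j^2\bigr)$ from lowering every $e_j$ grows with $k$. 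Finally, note that even the optimality half of your sketch elides real content: nefness of $-(K_X+\frac{n-2}{n}\widetilde{S_n})$ for $k>n+2$ general points is not automatic ``by generality'' --- establishing it is exactly the conditions-counting argument of the paper's Section 7.
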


Finally we give some examples of smooth  weak log del Pezzo surfaces
which make the above bounds optimal.

\begin{thm}[=Theorem \ref{example2}]
Let $X$ be a smooth surface which is given by blowing up $k$ points
on $\mathbb{F}_n$, $n\geqslant 2$. Assume that these points are not
on $S_n$, and no two of them are on the same fiber of $\mathbb{F}_n \rightarrow \mathbb{P}^1$. $S_{n}^\prime$
is the strict transform of $S_{n}$. Then

(1)  if $k\leqslant n+2$, then $(X,(1-\frac{2}{n})S_n^\prime)$ is a
weak log del Pezzo surface;

(2) if $n+2< k< \frac{1}{n}(n+2)^2$ and the points are in general
position, then $(X,(1-\frac{2}{n})S_n^\prime)$ is a weak log del
Pezzo surface;

(3) if $k\geqslant \frac{1}{n}(n+2)^2$, then $(X,\Delta)$ can not be
a weak log del Pezzo surface for any boundary $\Delta$.
\end{thm}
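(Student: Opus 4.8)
The plan is to work directly with the blow-up $\pi\colon X\to\mathbb{F}_n$ at $p_1,\dots,p_k$, with exceptional curves $E_1,\dots,E_k$. Write $F$ for a fibre of $\mathbb{F}_n\to\mathbb{P}^1$ and set $C_0=S_n+nF$, the class of a section disjoint from $S_n$, so that $C_0^2=n$, $C_0\cdot F=1$, $C_0\cdot S_n=0$ and $C_0\cdot(aS_n+bF)=b$. Since the $p_i$ avoid $S_n$ we have $S_n'=\pi^*S_n$, and $1-\frac2n\in[0,1]$ for $n\geqslant 2$, so $(X,(1-\frac2n)S_n')$ is a legitimate pair. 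A direct computation gives
\[
-(K_{\mathbb{F}_n}+(1-\tfrac2n)S_n)=\tfrac{n+2}{n}\,C_0,
\]
which is nef on $\mathbb{F}_n$. Hence $L:=-(K_X+(1-\frac2n)S_n')=\frac{n+2}{n}\pi^*C_0-\sum_iE_i$ and $L^2=\frac{(n+2)^2}{n}-k$. Since $L\cdot E_i=1>0$ and $L\cdot S_n'=\frac{n+2}{n}(C_0\cdot S_n)=0$, nefness reduces to showing that for every irreducible curve $C=aS_n+bF$ on $\mathbb{F}_n$, with strict transform $C'=\pi^*C-\sum_im_iE_i$ and $m_i=\operatorname{mult}_{p_i}C$,
\[
L\cdot C'=\tfrac{n+2}{n}\,b-\sum_i m_i\geqslant 0.
\]

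\textbf{Part (1).} I will use two elementary bounds valid for any admissible configuration: since $C\cdot F=a$ each $m_i\leqslant a$, and since $C\neq S_n$ is irreducible, $C\cdot S_n=b-an\geqslant 0$, i.e. $b\geqslant an$. (Fibres give $C_0\cdot C=1$ with a single multiplicity-one point, so $L\cdot C'\geqslant\frac2n$, and $S_n$ is already handled.) For the remaining curves $a\geqslant 1$, so $\sum_i m_i\leqslant ka\leqslant(n+2)a\leqslant\frac{n+2}{n}b$ whenever $k\leqslant n+2$; thus $L$ is nef, and as $k\leqslant n+2<\frac{(n+2)^2}{n}$ we get $L^2>0$, so $L$ is nef and big.

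\textbf{Part (2).} Bigness is again $L^2=\frac{(n+2)^2}{n}-k>0$, so only nefness is at issue, and this is exactly where general position enters. I combine three inputs: the fibre bound $m_i\leqslant a$; the genus bound from irreducibility of $C'$ on the smooth surface $X$, namely $p_a(C')\geqslant 0$, equivalently $\sum_i\binom{m_i}{2}\leqslant p_a(C)$; and $k<\frac{(n+2)^2}{n}$. The genus bound together with $m_i\leqslant a$ gives $\sum_i m_i\leqslant k+p_a(C)$, and for $a\geqslant 2$, or for $a=1$ with $b\geqslant n+2$, a short computation shows $\min\{ka,\;k+p_a(C)\}\leqslant\frac{n+2}{n}b$, so $L\cdot C'\geqslant 0$ with no general-position assumption. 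The genuinely special classes are the low sections $a=1$, $b\in\{n,n+1\}$: indeed $k$ points on a single member of $|C_0|$ would give $L\cdot C'=(n+2)-k<0$. Here I invoke general position in the form that no member of $|C_0|$ (resp.\ $|S_n+(n+1)F|$) contains more than $\dim|C_0|=n+1$ (resp.\ $\dim|S_n+(n+1)F|=n+3$) of the $p_i$, via the standard incidence/dimension count for general points, which forces $\sum_i m_i\leqslant\frac{n+2}{n}b$ for these classes as well.

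\textbf{Part (3).} No general position is needed. If $(X,\Delta)$ were a weak log del Pezzo surface for some boundary $\Delta$, then $-(K_X+\Delta)$ would be nef and big, so by Theorem \ref{general blowup} (whose hypotheses are only that the $p_i$ avoid $S_n$ and lie on distinct fibres) one would have $0<(K_X+\Delta)^2\leqslant n+4+\frac4n-k=\frac{(n+2)^2}{n}-k\leqslant 0$, a contradiction. I expect the main obstacle to be Part (2): organizing the case analysis over all $(a,b)$ and verifying $\min\{ka,\,k+p_a(C)\}\leqslant\frac{n+2}{n}b$ for the non-special classes, while correctly isolating the finitely many low-section classes where general position is essential and rigorously justifying the dimension estimate there — in particular checking that it is not spoiled by reducibility of the extremal members of the relevant linear systems.
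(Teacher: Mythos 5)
Parts (1) and (3) of your proposal are correct, and they coincide with the paper's own argument: your part (3) is exactly the paper's deduction from its volume bound (Corollary to Theorem \ref{general blowup}), and your part (1) is the paper's Fact (4). Your reduction of nefness to the inequality $\sum_i m_i \leqslant \frac{n+2}{n}\,b$ for irreducible curves $C\equiv aS_n+bF$ (after disposing of $S_n$, the fibres, and the $E_i$) is also the same reduction the paper makes.

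The gap is in part (2), at precisely the step you flagged. The claimed inequality $\min\{ka,\ k+p_a(C)\}\leqslant\frac{n+2}{n}b$ for $a\geqslant 2$ is false. Take $n\geqslant 5$, $k=n+4$ (allowed, since $n+4<\frac{(n+2)^2}{n}$), and $C\equiv 3C_0=3S_n+3nF$, so $a=3$, $b=3n$. Then
\begin{equation*}
ka=3n+12,\qquad k+p_a(C)=k+\tfrac{(a-1)(2b-an-2)}{2}=(n+4)+(3n-2)=4n+2,
\end{equation*}
and both exceed $\frac{n+2}{n}b=3n+6$. The failure is structural: $ka$ ignores the genus constraint entirely, while $k+p_a(C)$ uses it only through the pointwise estimate $m_i\leqslant\binom{m_i}{2}+1$, which is lossy exactly when $m_i\geqslant 3$. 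What is true is the \emph{joint} optimization: maximizing $\sum_i m_i$ subject to \emph{both} $m_i\leqslant a$ and $\sum_i\binom{m_i}{2}\leqslant p_a(C)$ simultaneously. In the example above this gives $\sum_i m_i\leqslant 2(n+4)+(n-3)=3n+5\leqslant 3n+6$ (all points at multiplicity $2$, then $n-3$ upgrades to multiplicity $3$ within the genus budget), so nefness does hold for this class — but only the joint bound sees it, not the minimum of the two separate bounds. So your part (2) requires this more delicate greedy/convexity analysis over all $(a,b)$, which you have not carried out.

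For contrast, the paper avoids the genus bound altogether: it counts the conditions $\sum_i\binom{a_i+1}{2}$ that a nefness-violating curve would impose against $h^0(\mathcal{O}_{\mathbb{F}_n}(C))$, and shows via $\sum_i a_i^2\geqslant(\sum_i a_i)^2/k$ and a case division on $\beta=b-na$ versus $n$ that the conditions outnumber the sections; this invokes general position (independence of the point conditions) for \emph{all} classes with $2a\geqslant\beta+1$, not only for your two low-section classes, together with two extra genericity conditions to kill the equality cases. Your structural claim that general position is needed only for $a=1$, $b\in\{n,n+1\}$ is plausible for irreducible curves — the genus bound does rule out the paper's equality configurations with $a\geqslant 2$ — and if the joint optimization were carried out in full it would give a genuinely different (and in that respect sharper) proof; but as written, the $a\geqslant 2$ case of your argument is broken.
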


\section{Preliminaries}

\subsection{Minimal resolution}
Let $(X, \Delta)$ be an $\epsilon$-lc weak log del Pezzo surface.
The minimal resolution $\pi: {X_\textrm{min}}\rightarrow X$ of $(X,
\Delta)$ is the unique proper birational morphism such that
${X_\textrm{min}}$ is a smooth projective surface and
$K_{{X_\textrm{min}}}+\Delta_{{X_\textrm{min}}}=\pi^*(K_X+\Delta)$
for some effective $\mathbb{R}$-divisor $\Delta_{{X_\textrm{min}}}$
on ${X_\textrm{min}}$. Note that minimal resolutions always exist
for two-dimensional log pairs.

\subsection{Hirzebruch surfaces and projective cones}
Hirzebruch surfaces play important roles in this article. We
recall some basic properties of the Hirzebruch surfaces
$\mathbb{F}_n=\mathbb{P}_{\mathbb{P}^1}(\mathcal{O}_{\mathbb{P}^1}\oplus\mathcal{O}_{
\mathbb{P}^1}(n))$, $n\geqslant 0$. Denote by $h$ (resp. $f$) the
class in $\textrm{Pic }\mathbb{F}_n$ of the tautological bundle
$\mathcal{O}_{\mathbb{F}_n}(1)$ (resp. of a fiber). Then
$\textrm{Pic }\mathbb{F}_n=\mathbb{Z}h\oplus \mathbb{Z}f$ with
$f^2=0$, $f.h=1$, $h^2=n$. If $n>0$, there is a unique
irreducible curve $S_n\subset \mathbb{F}_n$ such that $S_n \equiv h-nf$, $S_n^2=-n$. For
$n=0$, we can also choose one curve whose class in $\textrm{Pic
}\mathbb{F}_0$ is $h$ and denote it by $S_0$. We often denote by $F$ a fiber  of $\mathbb{F}_n \rightarrow \mathbb{P}^1$ and by $F_p$ the fiber passing through some point $p\in \mathbb{F}_n$.

The following easy proposition allows us to choose the position of
the blow-up center on Hirzebruch surfaces.
\begin{prop}\label{blowupposition}
$\mathbb{F}_n$ blown up at a point on $S_n$ is isomorphic to
$\mathbb{F}_{n+1}$ blown up at a point not on $S_{n+1}$.
\end{prop}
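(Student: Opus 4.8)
The plan is to realize the asserted isomorphism as an \emph{elementary transformation} of ruled surfaces. Fix a point $p\in S_n$ and let $\pi\colon Y\to\mathbb{F}_n$ be the blow-up at $p$, with exceptional curve $E$ (so $E^2=-1$). The idea is that $Y$ also admits a \emph{second} contraction, namely of the strict transform $\tilde{F}_p$ of the fiber $F_p$ through $p$; I would show this second contraction identifies $Y$ with $\mathbb{F}_{n+1}$ blown up at a point off its negative section. Thus the single surface $Y$ is displayed as a blow-up of $\mathbb{F}_n$ at a point of $S_n$ and as a blow-up of $\mathbb{F}_{n+1}$ at a point not on $S_{n+1}$, which is exactly the statement.

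First I would record the relevant intersection numbers on $Y$. Since $p$ lies on both $S_n$ and $F_p$, and since $S_n$ meets $F_p$ transversally (as $S_n\cdot f=1$), the strict transforms satisfy $\tilde{F}_p^2=F_p^2-1=-1$, $\tilde{S}_n^2=S_n^2-1=-(n+1)$, and $\tilde{S}_n\cdot\tilde{F}_p=0$, while $E\cdot\tilde{F}_p=E\cdot\tilde{S}_n=1$. In particular $\tilde{F}_p$ is a $(-1)$-curve disjoint from $\tilde{S}_n$, so by Castelnuovo's criterion there is a contraction $\sigma\colon Y\to Z$ of $\tilde{F}_p$ to a point $q$. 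The ruling $\mathbb{F}_n\to\mathbb{P}^1$ pulls back to a fibration $Y\to\mathbb{P}^1$ whose unique reducible fiber is $\tilde{F}_p+E$; contracting $\tilde{F}_p$ turns this into a relatively minimal $\mathbb{P}^1$-fibration on $Z$, so $Z$ is a Hirzebruch surface.

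To pin down which one, I would use the push-forward formula $\sigma_*A\cdot\sigma_*B=A\cdot B+(A\cdot\tilde{F}_p)(B\cdot\tilde{F}_p)$ for the contraction of the $(-1)$-curve $\tilde{F}_p$. Then $\sigma_*\tilde{S}_n$ is a section with $(\sigma_*\tilde{S}_n)^2=-(n+1)$, which forces $Z\cong\mathbb{F}_{n+1}$ with negative section $S_{n+1}=\sigma_*\tilde{S}_n$ (and one checks $\sigma_*E$ has square $0$, a fiber, consistently). Since $\sigma$ is the blow-up of $Z=\mathbb{F}_{n+1}$ at $q$, the surface $Y$ is $\mathbb{F}_{n+1}$ blown up at $q$, and it remains only to see $q\notin S_{n+1}$; this follows from $\tilde{S}_n\cdot\tilde{F}_p=0$, since the section $\tilde{S}_n$ is disjoint from the contracted curve $\tilde{F}_p$ and hence its image $S_{n+1}$ avoids $q=\sigma(\tilde{F}_p)$. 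I expect the only genuinely delicate step to be confirming that $Z$ is a \emph{relatively minimal} ruled surface (a true Hirzebruch surface, not merely a smooth rational surface), together with the careful bookkeeping of self-intersections under the blow-up and blow-down; it is precisely the sign in $\tilde{S}_n^2=-(n+1)$, coming from the hypothesis $p\in S_n$, that selects $\mathbb{F}_{n+1}$ rather than $\mathbb{F}_{n-1}$.
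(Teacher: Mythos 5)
Your proposal is correct and takes essentially the same approach as the paper: the paper's one-line proof is exactly this elementary transformation (blow up a point of $S_n$, then blow down the strict transform of the fiber through it to land on $\mathbb{F}_{n+1}$). Your version merely supplies the intersection-number bookkeeping ($\tilde{F}_p^2=-1$, $\tilde{S}_n^2=-(n+1)$, $\tilde{S}_n\cdot\tilde{F}_p=0$) and the relative-minimality check that the paper leaves implicit.
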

\begin{proof}
Blowing up a point on $S_n\subset \mathbb{F}_n$, and blowing down
the strict transform of the fiber, we get exactly
$\mathbb{F}_{n+1}$.
\end{proof}

Now we consider $\mbox{PC}_n$, the projective cone over a rational
normal curve of degree $n \geqslant2$ with the unique singular point
$O\in \mbox{PC}_n$. Blowing up $O\in \mbox{PC}_n$, we have a birational morphism
$\phi: \mathbb{F}_n \rightarrow \textrm{PC}_n$, and
$$
K_{\mathbb{F}_n}+\Big(1-\frac{2}{n}\Big)S_n=\phi^*(K_{\textrm{PC}_n}).
$$
Hence the minimal resolution of
$(\mbox{PC}_n, 0)$ is $(\mathbb{F}_n, (1-\frac{2}{n})S_n)$, which
makes the examples in Theorem \ref{main thm} very natural. It is
easy to see that $\mbox{PC}_n$ is $\mathbb{Q}$-factorial of Picard
number one with $-K_{\textrm{PC}_n}$ ample.

\subsection{Intersection inequalities}
In this section we recall some easy inequalities on intersection
number of curves on surfaces, which is frequently used in this
article.

Assume that $C$ and $D$ are divisors on a smooth surface $X$ having no
common irreducible component, then by \cite[Proposition V.1.4]{H},
$$
C.D=\sum_{P\in C\cap D}(C.D)_P,
$$
where $(C.D)_P$ is the intersection multiplicity of $C$ and $D$ at $P$. And by easy calculation (cf. \cite[Exercise I.5.4(a)]{H}), for a point $p$, we have
$$
(C.D)_p\geqslant \mbox{mult}_p(C)\cdot\mbox{mult}_p(D).
$$

In this article, we will only use a special case when $D$ is an
irreducible curve with $\mbox{mult}_{p_i}(D)=1$ for $p_i \in D$, $1\leqslant i
\leqslant k$. In this case, we have
\begin{align*}
 C.D
\geqslant{}& \sum_{P\in C\cap D}(C.D)_P\geqslant \sum_{p_i\in C}(C.D)_{p_i}\\
\geqslant{}& \sum_{p_i\in C}
\mbox{mult}_{p_i}(C)\cdot\mbox{mult}_{p_i}(D)\\
={}& \sum_{p_i\in C} \mbox{mult}_{p_i}(C)\\
={}& \sum_{i=1}^k \mbox{mult}_{p_i}(C).
\end{align*}
We will use this inequality frequently without mention.

\section{Bounding the volumes I}

In this section, we prove Theorem \ref{main thm}.

Let $(X,\Delta)$ be an $\epsilon$-lc weak log del Pezzo surface.
Take the minimal resolution $\pi: ({X_\textrm{min}},
\Delta_{X_\textrm{min}}) \rightarrow (X, \Delta)$ with
$K_{{X_\textrm{min}}}+\Delta_{{X_\textrm{min}}}=\pi^*(K_X+\Delta)$.
It is easy to see that $({X_\textrm{min}}, \Delta_{X_\textrm{min}})$
is a weak log del Pezzo surface, $\Delta_{X_{\textrm{min}}}$ is with
coefficients in $[0, 1-\epsilon]$ and we have
$$
\textrm{Vol}(-(K_{{X_\textrm{min}}}+\Delta_{{X_\textrm{min}}}))=(K_{{X_\textrm{min}}}+\Delta_{{X_\textrm{min}}})^2=(K_X+\Delta)^2=\textrm{Vol}(-(K_X+\Delta
)).
$$
Hence replacing $(X, \Delta)$ by $({X_\textrm{min}},
\Delta_{X_\textrm{min}})$, we may assume that $X$ is smooth and
$\Delta$ is with coefficients in $[0, 1-\epsilon]$ and $-(K_X+\Delta)$ is nef and big.

The following lemma gives a rough classification of smooth weak log
del Pezzo surfaces.

\begin{lem}[{cf. \cite[Lemma 1.4]{AM}}]\label{rat lem}
If $X$ is a smooth surface and $\Delta$ is with coefficients in $[0,
1-\epsilon]$ and $-(K_X+\Delta)$ is nef and big, then $X$ is rational,
and either $X\simeq \mathbb{P}^2$ or there exists a birational
morphism $g:X\rightarrow \mathbb{F}_n$ with $n\leqslant 2/\epsilon$.
\end{lem}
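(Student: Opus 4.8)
The plan is to extract a single numerical ``engine'' from the hypotheses and then feed it the negative section of a Hirzebruch surface. First I would record that $-K_X$ is big: since $-(K_X+\Delta)$ is big and $\Delta\geqslant 0$, the divisor $-K_X=-(K_X+\Delta)+\Delta$ is big, so $K_X$ is not pseudo-effective and $X$ is uniruled. Next I would establish the key inequality: for any irreducible \emph{rational} curve $C$ on the smooth surface $X$ with $C^2<0$, one has $-C^2\leqslant 2/\epsilon$. Writing $\Delta=dC+\Delta''$ with $0\leqslant d\leqslant 1-\epsilon$ and $C\not\subseteq\operatorname{Supp}\Delta''$, nefness of $-(K_X+\Delta)$ gives $K_X\cdot C\leqslant -\Delta\cdot C\leqslant -dC^2$, while adjunction gives $K_X\cdot C=-2-C^2$; combining these and using $1-d\geqslant\epsilon>0$ together with $C^2<0$ yields
\[
-2\leqslant (1-d)C^2\leqslant \epsilon C^2,
\]
i.e. $-C^2\leqslant 2/\epsilon$. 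This is the inequality that will both drive rationality and produce the bound on $n$.

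For rationality I would separate the two characteristics. In characteristic zero, Kawamata--Viehweg vanishing applied to the nef and big divisor $-(K_X+\Delta)$ gives $H^i(X,\mathcal O_X)=H^i(X,K_X+\lceil -(K_X+\Delta)\rceil)=0$ for $i>0$, since the coefficients of $\Delta$ lying in $[0,1-\epsilon]$ force $\lceil-\Delta\rceil=0$; hence $q(X)=p_g(X)=0$, and as $-K_X$ big forces $P_2(X)=0$, Castelnuovo's criterion shows $X$ is rational. In arbitrary characteristic, where vanishing fails, I would instead run a $K_X$-minimal model program: as $X$ is not minimal, contracting $(-1)$-curves produces a birational morphism onto $\mathbb P^2$ or onto a geometrically ruled surface $p:X'\to B$, with $q(X)=g(B)$. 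One then has to exclude $g(B)\geqslant 1$; applying the genus-$\geqslant 1$ form of the adjunction computation above to the strict transform of a section shows every section has non-negative self-intersection, and combining this with the bigness of $-(K_X+\Delta)$ along the lines of \cite{AM} yields a contradiction. I expect this exclusion of positive base genus to be the main obstacle, since it is precisely the step where the clean numerical engine does not by itself suffice and one must play bigness against the ruled structure.

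Once $X$ is known to be rational, the minimal model program terminates at $\mathbb P^2$ or at some Hirzebruch surface $\mathbb F_n$. If it terminates at $\mathbb P^2$ via a non-isomorphism, then the last blow-down contracts a $(-1)$-curve to a point of $\mathbb P^2$, so the morphism factors through $\mathbb F_1=\operatorname{Bl}_p\mathbb P^2$ and yields a birational morphism $g:X\to\mathbb F_1$; thus either $X\simeq\mathbb P^2$ or a morphism $g:X\to\mathbb F_n$ exists. It remains to bound $n$. For $n\leqslant 2$ there is nothing to prove, since $2/\epsilon\geqslant 2$. For $n\geqslant 3$ I would apply the key inequality to the strict transform $\widetilde{S_n}\subset X$ of the negative section $S_n\subset\mathbb F_n$: it is an irreducible rational curve with $\widetilde{S_n}^{\,2}\leqslant S_n^2=-n<0$, so $n\leqslant -\widetilde{S_n}^{\,2}\leqslant 2/\epsilon$, as required (Proposition \ref{blowupposition} ensures this set-up is compatible with moving blow-up centers off $S_n$). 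Assembling these steps gives the rationality, the morphism to $\mathbb F_n$, and the bound $n\leqslant 2/\epsilon$.
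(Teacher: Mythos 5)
Your key inequality, the bound $n\leqslant 2/\epsilon$ obtained from the strict transform of $S_n$, and the reduction via contracting $(-1)$-curves are all correct, and the last of these matches the paper's skeleton. But the step you yourself flag as ``the main obstacle'' --- excluding a geometrically ruled surface $p\colon X'\to B$ with $g(B)\geqslant 1$ --- is a genuine gap, and ``combining this with the bigness of $-(K_X+\Delta)$ along the lines of \cite{AM}'' is not an argument. Your numerical engine only bites on curves of \emph{negative} self-intersection: it shows any such curve is rational, hence cannot dominate $B$, hence (since fibers of a geometrically ruled surface are irreducible of square zero) cannot exist when $g(B)\geqslant 1$. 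That settles one case, but there really are geometrically ruled surfaces over positive-genus curves carrying \emph{no} curve of negative self-intersection at all --- $B\times\mathbb{P}^1$, or elliptic ruled surfaces with invariant $e\leqslant 0$ --- and on those your inequality is vacuous, while bigness of $-K$ by itself is not obviously contradictory either (note that on an elliptic ruled surface with $e>0$ the divisor $-K$ \emph{is} big, so bigness alone is a delicate weapon here). Since the lemma is needed in arbitrary characteristic, your Kawamata--Viehweg detour cannot substitute for this step, so the proposal is incomplete exactly where the real work lies.

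The paper closes this second case by an idea absent from your proposal: by the Base Point Free theorem $-(K_X+\Delta)$ is semi-ample, so one chooses an effective nef and big $D$ with $K_X+\Delta+D\equiv 0$ and $B=\Delta+D$ still with coefficients in $[0,1-\epsilon]$. Pushing down to $\overline{X}$, one has $K_{\overline{X}}\equiv -\overline{B}$, so if $g\geqslant 1$ and every curve has non-negative square, then $0\geqslant 8-8g=K_{\overline{X}}^2=\overline{B}^2\geqslant 0$ forces all products $\overline{B}_j.\overline{B}_k=0$, in particular $\overline{D}^2=0$, contradicting that $\overline{D}$ (the pushforward of a nef and big divisor) is nef and big. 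Alternatively, your gap can be filled without introducing $D$: with $\Delta'=g_*\Delta$, nefness and bigness of $-(K_{X'}+\Delta')$ give $0<(K_{X'}+\Delta')^2\leqslant (K_{X'}+\Delta').K_{X'}$, i.e. $K_{X'}.\Delta'> -K_{X'}^2=8g-8\geqslant 0$, whereas for each component $\Gamma$ of $\Delta'$ the absence of negative curves yields $K_{X'}.\Gamma\leqslant (K_{X'}+\Delta').\Gamma\leqslant 0$, hence $K_{X'}.\Delta'\leqslant 0$ --- a contradiction. Either repair is short, but some such argument must be supplied; as written, the proposal proves the lemma only under the additional assumption that the ruled model carries a negative curve. (Two minor points: your key inequality needs $C$ \emph{smooth} rational, i.e. $p_a(C)=0$, which is what your application uses; and the appeal to Proposition \ref{blowupposition} in the last step is unnecessary, since you work with whatever $\mathbb{F}_n$ the MMP ends on.)
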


\begin{proof}
By Base Point Free Theorem (cf. \cite[Theorem 7.1]{HM}),
$-(K_X+\Delta)$ is semi-ample. Hence there exists an effective
$\mathbb{R}$-divisor $D$ which is nef and big such that
$K_X+\Delta+D\equiv0$ and $\Delta+D$ is with coefficients in $[0,
1-\epsilon]$.

Now consider the pair $(X,B=\Delta+D)$ and write $B=\sum b_jB_j$.
$K_X\equiv -B\neq 0$. Assuming $X\neq \mathbb{P}^2$, we can contract
$(-1)$-curves on $X$ and its contractions until we get a birational
morphism $g: X\rightarrow \overline{X}$ to a model $\overline{X}$
which is a $\mathbb{P}^1$-bundle over a smooth curve $C$.
Denote by $\overline{B}_j$ (resp. $\overline{B}$) the image of $B_j$
(resp. B) on $\overline{X}$. Now
$K_{\overline{X}}+\overline{B}=g_*(K_X+B)\equiv 0$.

If $g(C)>0$ and there exists a curve $E$ on $\overline{X}$ with
$E^2<0$, then
\begin{align*}
-2\leqslant{}&
2p_a(E)-2=(K_{\overline{X}}+E).E\\
={}&\epsilon E^2+(K_{\overline{X}}+(1-\epsilon)E).E\\
\leqslant{}&\epsilon E^2+(K_{\overline{X}}+\overline{B}).E=\epsilon
E^2<0,
\end{align*}
which implies that $E$ is a smooth rational curve. Since it does not
lie in a fiber of $\overline{X}\rightarrow C$, $g(C)=0$, which is a
contradiction.

If $g(C)>0$ and $E^2\geqslant0$ for all  curves $E$ on
$\overline{X}$, then
$$
0\geqslant  8-8g(C)=K_{\overline{X}}^2=\overline{B}^2\geqslant0.
$$
It follows that all
$\overline{B}_j^2=\overline{B}_j.\overline{B}_k=0$, in particular,
$\overline{D}^2=0$, where $\overline{D}$ is the image of $D$. Since
blow-downs preserve nefness and bigness, $\overline{D}$ is nef and
big, which is a contradiction.

Hence $g(C)=0$ and $\overline{X}=\mathbb{F}_n$ for some $n$. Since
\begin{align*}
-2={}&
2p_a(S_n)-2=(K_{\overline{X}}+S_n).S_n\\
={}&\epsilon S_n^2+(K_{\overline{X}}+(1-\epsilon)S_n).S_n\\
\leqslant{}&\epsilon
S_n^2+(K_{\overline{X}}+\overline{B}).S_n=\epsilon S_n^2=-\epsilon
n,
\end{align*}
we have $n\leqslant 2/\epsilon$.
\end{proof}

Hence at this time $X$ must be rational. We can analyze it case by
case.

\textbf{Case 1:  $X\simeq \mathbb{P}^2.$}

In this case, $-K_X$ is ample with $-K_X\geqslant -(K_X+\Delta)$.
Hence
$$
(K_X+\Delta)^2\leqslant K_X^2=9.
$$
Moreover, the equality holds if and only if $\Delta=0$, i.e., $(X,
\Delta)\simeq(\mathbb{P}^2, 0)$.

\textbf{Case 2: $X\simeq \mathbb{F}_n$, $n\leqslant \lfloor
2/\epsilon \rfloor$.}

Write $\Delta=aS_n+\sum d_i \Delta_i$, where $\Delta_i$ is a reduced
irreducible curve  different from $S_n$ and $\Delta_i\equiv
\alpha_i h+\beta_i f$ with $\alpha_i,\beta_i \geqslant 0$.
Since $K_X+\Delta=K_X+aS_n+\sum d_i \Delta_i$ is anti-nef, we have
\begin{align}\label{3.1}
0\geqslant (K_X+\Delta).S_n=n-2-na+\sum d_i \beta_i  \geqslant
n-2-na;
\end{align}
\begin{align}\label{3.2}
0\geqslant (K_X+\Delta).f=-2+a+\sum d_i \alpha_i  \geqslant -2+a.
\end{align}
Now $K_X+\Delta=(-2+a+\sum d_i \alpha_i)h+(n-2-na+\sum d_i
\beta_i)f$, we have
\begin{align*}
{}& (K_X+\Delta)^2 \\
= {}&  \left(\left(-2+a+\sum d_i \alpha_i\right) h+ \left(n-2-na+\sum d_i \beta_i\right) f\right)^2\\
= {}&  n\left(-2+a+\sum d_i \alpha_i\right)^2+2\left(-2+a+\sum d_i \alpha_i\right) \left(n-2-na+\sum d_i \beta_i\right)\\
\leqslant {}&  n(-2+a)^2+2(-2+a)(n-2-na)\\
= {}&  8+(2n-4)a-na^2 \\
\leqslant{}&
\begin{cases}
n+4+\frac{4}{n} & \mbox{if } n\geqslant 2\\
8  & \mbox{if } n=0,1
\end{cases}\\
 \leqslant {}&  \lfloor 2/\epsilon \rfloor+ 4+\frac{4}{\lfloor
2/\epsilon \rfloor}.
\end{align*}

The first inequality follows from (\ref{3.1}) and (\ref{3.2}). The
equality holds if and only if $\alpha = -2+a$ and $\beta = n-2-na$,
if and only if $\sum d_i \alpha_i=\sum d_i \beta_i=0$, if and only
if $\sum d_i\Delta=0$.
The second inequality follows from the monotonic property of the
quadratic function of $a$. The equality holds if and only if
$a=\{0, 1-\frac{2}{n}\}$.
The third inequality follows from the monotonic property of the first
function on the range of $n$ which is $2\leqslant n \leqslant
\lfloor 2/\epsilon \rfloor$ and the fact that $\lfloor 2/\epsilon
\rfloor+ 4+\frac{4}{\lfloor 2/\epsilon \rfloor}\geqslant 8$. The
equality holds if and only if $n = \lfloor 2/\epsilon \rfloor$ or  $\lfloor 2/\epsilon \rfloor$=2 and $n=0,1$.

Hence in this case, we have
$$
(K_X+\Delta)^2 \leqslant \lfloor 2/\epsilon \rfloor+
4+\frac{4}{\lfloor 2/\epsilon \rfloor}.
$$
The equality holds if and only if one of the followings holds: (i) $(X,\Delta)=(\mathbb{F}_n,
(1-\frac{2}{n})S_n)$ where $n=\lfloor 2/\epsilon \rfloor$ or (ii)  $\lfloor 2/\epsilon
\rfloor$=2 and $(X,\Delta)=(\mathbb{F}_0, 0)$ or $(\mathbb{F}_1, 0)$.

\textbf{Case 3: there exists a nontrivial birational morphism
$g:X\rightarrow \mathbb{F}_n$ with $n\leqslant \lfloor 2/\epsilon
\rfloor$.}

By the proof of Lemma \ref{rat lem}, we know that $g$ is decomposed
by blow-downs of $(-1)$-curves. Since push-forward of blow-downs preserve nef curves,
$K_{\mathbb{F}_n}+g_*(\Delta)=g_*(K_X+\Delta)$ is anti-nef. Then by
Case 2, we have
$$
(K_{\mathbb{F}_n}+g_*(\Delta))^2\leqslant  \lfloor 2/\epsilon
\rfloor+ 4+\frac{4}{\lfloor 2/\epsilon \rfloor}.
$$
By Negativity Lemma (cf. \cite[Lemma 3.39]{KM}), we have
\begin{align}\label{star}
K_X+\Delta=g^*(K_{\mathbb{F}_n}+g_*(\Delta))+E,
\end{align}
where $E$ is an effective $g$-exceptional divisor. Then
\begin{align*}
{}&(K_X+\Delta)^2\\
= {}& (g^*(K_{\mathbb{F}_n}+g_*(\Delta))+E)(K_X+\Delta)\\
\leqslant {}& g^*(K_{\mathbb{F}_n}+g_*(\Delta))(K_X+\Delta)\\
= {}& (K_{\mathbb{F}_n}+g_*(\Delta))^2\\
\leqslant {}& \lfloor 2/\epsilon \rfloor+ 4+\frac{4}{\lfloor
2/\epsilon \rfloor}.
\end{align*}

The first inequality follows from the anti-nefness of $K_X+\Delta$ and
the effectiveness of $E$. The equality holds if and only if $E=0$.

Hence
$$
(K_X+\Delta)^2   \leqslant  \lfloor 2/\epsilon \rfloor+
4+\frac{4}{\lfloor 2/\epsilon \rfloor},
$$
where the equality holds only if there is a nontrivial crepant
birational morphism $g: (X,\Delta)\rightarrow (\mathbb{F}_n,
(1-\frac{2}{n})S_n)$ for some $n\geqslant 2$ or $g:
(X,\Delta)\rightarrow (\mathbb{F}_0, 0)$ or $g:
(X,\Delta)\rightarrow (\mathbb{F}_1, 0)$ obtained by blow-ups at points. But it
is easy to see that after non-trivial crepant blow-ups on the pair
$(\mathbb{F}_n, (1-\frac{2}{n})S_n)$ or $(\mathbb{F}_0, 0)$ or
$(\mathbb{F}_1, 0)$, $\Delta$ can not be effective.
Hence in this case,
$$
(K_X+\Delta)^2 < \lfloor 2/\epsilon \rfloor+ 4+\frac{4}{\lfloor
2/\epsilon \rfloor}.
$$

Summing up the above cases, we almost complete the proof of Theorem
\ref{main thm}. Recall that we take minimal
resolution at the beginning, now we come back to the singular case,
actually we have proved the inequality
$$
(K_X+\Delta)^2 \leqslant \lfloor 2/\epsilon \rfloor+
4+\frac{4}{\lfloor 2/\epsilon \rfloor}.
$$
And the equality holds if and only if one of the following
holds:

(1) $\epsilon \leqslant \frac{1}{2}$ and the minimal resolution of
$(X,\Delta)$ is $(\mathbb{F}_n, (1-\frac{2}{n})S_n)$, where
$n=\lfloor 2/\epsilon \rfloor$;

(2) $\epsilon >\frac{2}{5}$ and the minimal resolution of
$(X,\Delta)$ is $(\mathbb{P}^2, 0)$.

To finish the proof, we only need the next claim.
\begin{claim}
(1) If the minimal resolution of $(X,\Delta)$ is $(\mathbb{F}_n,
(1-\frac{2}{n})S_n)$ with $n\geqslant2$, then $(X,\Delta)$ is
$(\mathbb{F}_n, (1-\frac{2}{n})S_n)$ or  $(\textrm{PC}_n, 0)$.

(2) If the minimal resolution of $(X,\Delta)$ is $(\mathbb{P}^2,
0)$, then $(X,\Delta)$ is $(\mathbb{P}^2, 0)$.
\end{claim}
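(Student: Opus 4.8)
The plan is to reconstruct $X$ from its minimal resolution by analysing which curves the birational morphism $\pi\colon X_{\textrm{min}}\to X$ can contract. Write $K_{X_{\textrm{min}}}+\Delta_{X_{\textrm{min}}}=\pi^*(K_X+\Delta)$. Any $\pi$-exceptional curve $E$ (one contracted to a point) satisfies two constraints: first $E^2<0$, since the exceptional locus of a birational morphism of surfaces has negative-definite intersection matrix; and second $(K_{X_{\textrm{min}}}+\Delta_{X_{\textrm{min}}}).E=\pi^*(K_X+\Delta).E=0$. I would use these to show that at most one curve can be contracted in each case, and then read off $X$ and $\Delta$.

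For part (2), every irreducible curve on $\mathbb{P}^2$ is numerically a positive multiple of a line and so has positive self-intersection. Hence no curve can be $\pi$-exceptional, $\pi$ is an isomorphism, and $(X,\Delta)=(\mathbb{P}^2,\pi_*0)=(\mathbb{P}^2,0)$.

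For part (1), the first step is to compute the numerical class of $K_{\mathbb{F}_n}+(1-\frac{2}{n})S_n$. From $K_{\mathbb{F}_n}=-2h+(n-2)f$ and $S_n\equiv h-nf$ one gets $K_{\mathbb{F}_n}+(1-\frac{2}{n})S_n=-\frac{n+2}{n}h$, so for an irreducible curve $E\equiv\alpha h+\beta f$ one has $(K_{\mathbb{F}_n}+(1-\frac{2}{n})S_n).E=-\frac{n+2}{n}(\alpha n+\beta)$. This vanishes exactly when $E\equiv\alpha S_n$, and for $n\geqslant 2$ the only irreducible such curve is $S_n$ itself: a multiple with $\alpha\geqslant 2$ would force $E.S_n=-\alpha n<0$ for a curve distinct from $S_n$, which is impossible. (Equivalently, $S_n$ is the unique negative curve on $\mathbb{F}_n$.) Thus the exceptional locus of $\pi$ is either empty, giving $(X,\Delta)=(\mathbb{F}_n,(1-\frac{2}{n})S_n)$, or equal to $S_n$. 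In the latter case, since $S_n^2=-n\leqslant -2$, the contraction of $S_n$ is precisely the blow-down $\phi\colon\mathbb{F}_n\to\textrm{PC}_n$ recalled above; uniqueness of contractions identifies $X$ with $\textrm{PC}_n$, while $\Delta=\pi_*\big((1-\frac{2}{n})S_n\big)=0$ because $S_n$ is $\pi$-exceptional, giving $(X,\Delta)=(\textrm{PC}_n,0)$. The relation $K_{\mathbb{F}_n}+(1-\frac{2}{n})S_n=\phi^*(K_{\textrm{PC}_n})$ recorded above confirms that $(\mathbb{F}_n,(1-\frac{2}{n})S_n)$ really is the minimal resolution of $(\textrm{PC}_n,0)$, so both candidates genuinely occur.

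The main obstacle is the identification step in part (1): ruling out every curve other than $S_n$ as a possible exceptional curve, and then recognising the contraction of $S_n$ as the cone $\textrm{PC}_n$ rather than some other surface. The numerical computation isolating $S_n$ and the explicit description of $\phi\colon\mathbb{F}_n\to\textrm{PC}_n$ from the preliminaries handle exactly this; the remaining bookkeeping with push-forwards is routine.
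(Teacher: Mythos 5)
Your proof is correct, but it takes a genuinely different route from the paper's. The paper's own argument is model-theoretic: since $-(K_X+\Delta)$ is semi-ample (Base Point Free Theorem), the minimal resolution $\pi\colon(\mathbb{F}_n,(1-\frac{2}{n})S_n)\to(X,\Delta)$ is a semi-ample model of $-(K_{\mathbb{F}_n}+(1-\frac{2}{n})S_n)$, while $\phi\colon\mathbb{F}_n\to\mathrm{PC}_n$ is its ample model; Lemma 3.6.5 of \cite{BCHM} then yields a birational morphism $\mu\colon X\to\mathrm{PC}_n$ with $\phi=\mu\circ\pi$, and since $\phi$ contracts the single curve $S_n$, either $\pi$ or $\mu$ must be an isomorphism, which is exactly the dichotomy of the Claim. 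You instead classify the possible $\pi$-exceptional curves directly: such a curve is trivial against $K_{\mathbb{F}_n}+(1-\frac{2}{n})S_n$ by the projection formula and has negative self-intersection by negativity of exceptional loci, and your computation $K_{\mathbb{F}_n}+(1-\frac{2}{n})S_n\equiv-\frac{n+2}{n}h$ shows that only $S_n$ (respectively, no curve at all on $\mathbb{P}^2$) can satisfy both; uniqueness of contractions of a fixed curve then pins down $X$ as $\mathbb{F}_n$ or $\mathrm{PC}_n$. Your route is more elementary and self-contained: it needs neither the semi-ampleness of $-(K_X+\Delta)$ nor the model formalism of \cite{BCHM} (strictly a characteristic-zero reference, whereas the paper works in arbitrary characteristic), only intersection theory on $\mathbb{F}_n$ together with the standard facts that exceptional curves of a birational surface morphism have negative self-intersection and that a contraction is determined by the curves it contracts. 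What the paper's approach buys is brevity once the machinery is granted, and a factorization argument that does not require enumerating the negative curves on the resolution, so it would survive in settings where the minimal resolution and its curves are less explicit than they are here.
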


\begin{proof}
Let $\pi:(\mathbb{F}_n, (1-\frac{2}{n})S_n)\rightarrow
(X,\Delta)$ be the minimal resolution. Since $-(K_X+\Delta)$ is
semi-ample, $\pi$ is a semi-ample model of $-(K_{\mathbb{F}_n}+
(1-\frac{2}{n})S_n)$ (cf. \cite[Definition 3.6.4]{BCHM}).
Let $\phi:(\mathbb{F}_n, (1-\frac{2}{n})S_n)\rightarrow
(\textrm{PC}_n, 0)$ be the  minimal resolution. Since
$-K_{\textrm{PC}_n}$ is ample, $\phi$ is the ample model of
$-(K_{\mathbb{F}_n}+ (1-\frac{2}{n})S_n)$ (cf. \cite[Definition
3.6.4]{BCHM}).
Hence by \cite[Lemma 3.6.5]{BCHM}, there is a birational morphism
$\mu: X\rightarrow \textrm{PC}_n$, with $\phi=\mu\circ\pi$.
Since  $\phi$ only contracts one curve $S_n$, we conclude that
either $\mu$ or $\pi$ is an isomorphism.

(2) follows from similar argument.
\end{proof}

Then the proof of Theorem \ref{main thm} is completed.

\begin{remark}\label{rem1}
From the proof of Case 3, we can see that when
$\rho({X_\textrm{min}})$ increases, the upper bound of
$\textrm{Vol}(-(K_X+\Delta))$ decreases. More precisely:

1. when $\rho({X_\textrm{min}})=1$,  ${X_\textrm{min}} \simeq
\mathbb{P}^2$, and $\textrm{Vol}(-(K_X+\Delta))\leqslant 9$;

2. when $\rho({X_\textrm{min}})=2$,  ${X_\textrm{min}} \simeq
\mathbb{F}_n$ for some $n\leqslant \lfloor 2/\epsilon \rfloor$, and
$\textrm{Vol}(-(K_X+\Delta))\leqslant \lfloor 2/\epsilon \rfloor+
4+\frac{4}{\lfloor 2/\epsilon \rfloor}$;

3. when $\rho({X_\textrm{min}})\geqslant 3$, there exists a
birational morphism ${X_\textrm{min}} \rightarrow \mathbb{F}_n$ for
some $n\leqslant \lfloor 2/\epsilon \rfloor$, and
$\textrm{Vol}(-(K_X+\Delta)) < \lfloor 2/\epsilon \rfloor+
4+\frac{4}{\lfloor 2/\epsilon \rfloor}$.

Hence we want to decide the optimal bound of the volume up to
$\rho({X_\textrm{min}})$. In the following two sections, we give the
optimal bounds for the cases when $\rho({X_\textrm{min}})\geqslant
3$ and $\rho({X_\textrm{min}})\geqslant 4$.
\end{remark}

\section{Bounding the volumes II: $\rho({X_\textrm{min}})\geqslant 3$}

In this section we prove Theorem \ref{r3}.

By taking minimal resolution, we may assume that $X$ is smooth  and
$\Delta$ is with coefficients in $[0, 1-\epsilon]$.
By Remark \ref{rem1}, we may assume that $\rho(X)=3$.
When $\rho(X)=3$, by Lemma \ref{rat lem}, $X$ is given by blowing up
one point $p$ on $\mathbb{F}_n$  for some $n\leqslant \lfloor
2/\epsilon \rfloor$. By Proposition \ref{blowupposition}, we may
assume that $p \not\in S_n$.
Let $\phi: X\rightarrow \mathbb{F}_n$ be the blow-up at $p\in
\mathbb{F}_n$, $E\subset X$ be the exceptional curve and
$\overline{\Delta}$ be the push-forward of $\Delta$. Write
$\overline{\Delta}=aS_n+\sum d_i\overline{\Delta}_i$, where
$\overline{\Delta}_i$ is an irreducible curve different from $S_n$
and $\overline{\Delta}_i\equiv \alpha_i h+\beta_i f$ with
$\alpha_i,\beta_i \geqslant 0$.
Then by (\ref{star}), we have
$$
K_X+\Delta=\phi^*(K_{\mathbb{F}_n}+\overline{\Delta})+eE
$$
with $e\geqslant 0$. Hence
$$
\Delta=aS_n^\prime+\sum
d_i\Delta_i+(\mbox{mult}_p(\overline{\Delta})-1+e)E,
$$
where $S_n^\prime$ (resp. $\Delta_i$) is the strict transform of
$S_n$ (resp. $\overline{\Delta}_i$) on $X$. Hence in this setting
\begin{align}\label{mult4}
\mbox{mult}_p(\overline{\Delta})-1+e\geqslant 0.
\end{align}
Since $K_X+\Delta$ is anti-nef, so is
$K_{\mathbb{F}_n}+\overline{\Delta}$. Hence
\begin{align}\label{4.1}
0\geqslant (K_{\mathbb{F}_n}+\overline{\Delta}).S_n=n-2-na+\sum d_i
\beta_i;
\end{align}
\begin{align}\label{4.2}
0\geqslant (K_{\mathbb{F}_n}+\overline{\Delta}).f= -2+a+\sum d_i
\alpha_i .
\end{align}
Now denote the fiber in $\mathbb{F}_n$ passing through $p$ as $F_p$.
And we may assume that $\overline{\Delta}_1=F_p$.
Then  $\sum d_i\alpha_i$ can be estimated by the multiplicity
inequality (\ref{mult4}). We have
\begin{align*}
{}& \sum d_i\alpha_i \\
= {}& \left(\sum d_i \overline{\Delta}_i\right).F_p
=  \Big(\sum_{i>1} d_i \overline{\Delta}_i\Big).F_p\\
\geqslant {}& \mbox{mult}_p\Big(\sum_{i>1} d_i
\overline{\Delta}_i\Big)
=  \mbox{mult}_p\left(\sum d_i \overline{\Delta}_i\right)-d_1\\
= {}& \mbox{mult}_p(\overline{\Delta})-d_1\\
\geqslant {}& 1-e-d_1.
\end{align*}
Hence
\begin{align}\label{a4}
\sum d_i\alpha_i \geqslant \max\{0, 1-e-d_1\},
\end{align}
and
\begin{align}\label{b4}
\sum d_i\beta_i \geqslant d_1\beta_1=d_1.
\end{align}
Now we can estimate the volume. We have
\begin{align*}
{}&(K_X+\Delta)^2 \\
= {}& (K_{\mathbb{F}_n}+\overline{\Delta})^2-e^2\\
= {}& n\left(-2+a+\sum d_i\alpha_i\right)^2+2\left(-2+a+\sum
d_i\alpha_i\right)\left(n-2-na+\sum
d_i\beta_i\right)-e^2\\
\leqslant {}& n(-2+a+\max\{0, 1-e-d_1\})^2\\
{}& +2(-2+a+\max\{0,
1-e-d_1\})(n-2-na+d_1)-e^2\\
\leqslant {}& n(-2+a )^2+2(-2+a )(n-2-na+d_1)-(1-d_1)^2\\
\leqslant {}& n(-2+a )^2+2(-2+a )(n-2-na  )- 1 \\
\leqslant {}&
\begin{cases}
n+3+\frac{4}{n} & \mbox{if } n\geqslant 2\\
7  & \mbox{if } n=0,1
\end{cases}\\
\leqslant {}& \lfloor 2/\epsilon \rfloor+ 3+\frac{4}{\lfloor
2/\epsilon \rfloor}.
\end{align*}

The first inequality follows from (\ref{4.1}), (\ref{4.2}),  (\ref{a4})
and (\ref{b4}).
The second inequality holds since  the target can be viewed as a
function of $e$ and it reaches the maximum when $e=1-d_1$.
The third inequality holds since the target  can be viewed  as a
function of $d_1$ and it reaches the maximum when $d_1=0$.
The forth inequality holds since the target  can be viewed  as a
function of $a$ and it reaches the maximum when $a=\max\{0,
1-\frac{2}{n}\}$.
The last inequality holds since the target function is increasing
and $n\leqslant \lfloor 2/\epsilon \rfloor$.

Finally we can see that the bound is optimal by an example.
\begin{example}
Take $X$ to be the blow-up of $\mathbb{F}_n$ at $p$, where
$n=\lfloor 2/\epsilon \rfloor$ and $p\not\in S_n$. Then $(X,
(1-\frac{2}{n})S_n^\prime)$ is a weak log del Pezzo surface (cf.
Section 7, Theorem \ref{example2}(2)), and
$$
\Big(K_X+\Big(1-\frac{2}{n}\Big)S_n^\prime\Big)^2=\lfloor 2/\epsilon
\rfloor+ 3+\frac{4}{\lfloor 2/\epsilon \rfloor},
$$
where $S_n^\prime$ is the strict transform of $S_n$ on $X$.
\end{example}

\section{Bounding the volumes III: $\rho({X_\textrm{min}})\geqslant 4$}

In this section we prove Theorem \ref{r4}.

By taking minimal resolution, we may assume that $X$ is smooth  and
$\Delta$ is with coefficients in $[0, 1-\epsilon]$.
By Remark \ref{rem1}, we may assume that $\rho(X)=4$.
When $\rho(X)=4$, by Lemma \ref{rat lem}, $X$ is given by blowing up
one point $p_2$ on some $X_1$ while  $X_1$ is given by blowing up
one point $p_1$ on $\mathbb{F}_n$ for some $n\leqslant \lfloor
2/\epsilon \rfloor$. By Proposition \ref{blowupposition}, we may
assume that $p_1\not\in S_n$ and $p_2\not \in S_n^\prime$ where
$S_n^\prime$ is the strict transform of $S_n$ on $X_1$.
Let $\phi_1: X_1\rightarrow \mathbb{F}_n$ and $\phi_2: X\rightarrow
X_1$ be the blow-ups, $E_2\subset X$ and $E_1 \subset X_1$ be the
exceptional curves, $\Delta^\prime$ and $\overline{\Delta}$ be the
push-forwards of $\Delta$ by $\phi_2$ and $\phi_1\circ\phi_2$
respectively. Write $\overline{\Delta}=aS_n+\sum
d_i\overline{\Delta}_i$ where $\overline{\Delta}_i$ is an
irreducible curve different from $S_n$ and
$\overline{\Delta}_i\equiv \alpha_i h+\beta_i f$ with
$\alpha_i,\beta_i \geqslant 0$.
Then by (\ref{star}), we have
$$
K_{X_1}+\Delta^\prime=\phi_1^*(K_{\mathbb{F}_n}+\overline{\Delta})+e_1E_1
$$
and
$$
K_{X}+\Delta=\phi_2^*(K_{X_1}+\Delta^\prime)+e_2E_2
$$
with $e_1, e_2\geqslant 0$. Hence
$$
\Delta^\prime=aS_n^\prime+\sum
d_i\Delta_i^\prime+(\mbox{mult}_{p_1}(\overline{\Delta})-1+e_1)E_1,
$$
where $\Delta_i^\prime$ is the strict transform of
$\overline{\Delta}_i$ on $X_1$. Hence in this setting
\begin{align}\label{mult51}
\mbox{mult}_{p_1}(\overline{\Delta})-1+e_1\geqslant 0.
\end{align}
Similarly, we have
\begin{align}\label{mult52}
\mbox{mult}_{p_2}(\Delta^\prime)-1+e_2\geqslant 0.
\end{align}
Since $K_X+\Delta$ is anti-nef, so is
$K_{\mathbb{F}_n}+\overline{\Delta}$. Hence
\begin{align}\label{51}
0\geqslant (K_{\mathbb{F}_n}+\overline{\Delta}).S_n=n-2-na+\sum d_i
\beta_i ;
\end{align}
\begin{align}
\label{52} 0\geqslant
(K_{\mathbb{F}_n}+\overline{\Delta}).f=-2+a+\sum d_i \alpha_i.
\end{align}

Now there are two cases for the position of $p_2$: $p_2 \not\in E_1$
or $p_2 \in E_1$.

\textbf{Case 1: $p_2 \not\in E_1$. }

In this case, $p_2$ can be viewed as a point on $\mathbb{F}_n$, so
denote the fiber  of $\mathbb{F}_n \rightarrow \mathbb{P}^1$ passing through it by $F_{p_2}$. Denote the fiber
passing through $p_1$ by $F_{p_1}$.

There are two subcases: $F_{p_1}\neq F_{p_2}$ or $F_{p_1}= F_{p_2}$.

\textbf{Subcase 1.1: $F_{p_1}\neq F_{p_2}$. }

In this subcase, we may assume that $\overline{\Delta}_i=F_{p_i}$
for $i=1,2$.
Then $\sum d_i\alpha_i$ can be estimated by the multiplicity
inequalities (\ref{mult51}) and (\ref{mult52}). We have
\begin{align*}
 {}&\sum d_i\alpha_i \\
= {}& \left(\sum d_i \overline{\Delta}_i\right).F_{p_1}
=  \Big(\sum_{i\neq 1} d_i \overline{\Delta}_i\Big).F_{p_1}\\
\geqslant {}& \mbox{mult}_{p_1}\Big(\sum_{i\neq 1} d_i
\overline{\Delta}_i\Big)
=\mbox{mult}_{p_1}\left(\sum d_i \overline{\Delta}_i\right)-d_1\\
= {}& \mbox{mult}_{p_1}(\overline{\Delta})-d_1\\
\geqslant {}& 1-e_1-d_1,
\end{align*}
and also
\begin{align*}
{}&\sum d_i\alpha_i \\
= {}& \left(\sum d_i \overline{\Delta}_i\right).F_{p_2}
= \Big(\sum_{i\neq 2} d_i \overline{\Delta}_i\Big).F_{p_2}\\
\geqslant {}& \mbox{mult}_{p_2}\Big(\sum_{i\neq 2} d_i
\overline{\Delta}_i\Big)
=  \mbox{mult}_{p_2}\left(\sum d_i \overline{\Delta}_i\right)-d_2\\
= {}& \mbox{mult}_{p_2}(\overline{\Delta})-d_2
=  \mbox{mult}_{p_2}(\Delta^\prime)-d_2\\
\geqslant {}& 1-e_2-d_2.
\end{align*}
Hence
\begin{align}\label{511a}
\sum d_i\alpha_i \geqslant \max\{0,1-e_1-d_1,1-e_2-d_2\},
\end{align}
and
\begin{align}\label{511b}
\sum d_i\beta_i \geqslant d_1\beta_1+d_2\beta_2= d_1+d_2.
\end{align}
Now we can estimate the volume. We have
\begin{align*}
{}&(K_X+\Delta)^2\\
= {}& (K_{\mathbb{F}_n}+\overline{\Delta})^2-e_1^2-e_2^2\\
= {}& n\left(-2+a+\sum d_i\alpha_i\right)^2+2\left(-2+a+\sum
d_i\alpha_i\right)\left(n-2-na+\sum
d_i\beta_i\right)-e_1^2-e_2^2\\
\leqslant {}& n(-2+a+\max\{0,1-e_1-d_1,1-e_2-d_2\})^2\\
   {}&+2(-2+a+\max\{0,1-e_1-d_1,1-e_2-d_2\})(n-2-na+d_1+d_2)-e_1^2-e_2^2\\
\leqslant {}& n(-2+a )^2+2(-2+a )(n-2-na+d_1+d_2)-(1-d_1)^2-(1-d_2)^2\\
\leqslant {}& n(-2+a )^2+2(-2+a )(n-2-na  )- 2 \\
\leqslant {}&
\begin{cases}
n+2+\frac{4}{n} & \mbox{if } n\geqslant 2\\
6  & \mbox{if } n=0,1
\end{cases}\\
\leqslant {}& \lfloor 2/\epsilon \rfloor+ 2+\frac{4}{\lfloor
2/\epsilon \rfloor}.
\end{align*}

The first inequality follows from (\ref{51}), (\ref{52}), (\ref{511a})
and (\ref{511b}).
The second inequality holds since the target can be viewed  as a
function of $e_1$ and $e_2$ and it reaches the maximum when
$e_1=1-d_1$ and $e_2=1-d_2$.
The third inequality holds since the target can be viewed  as a
function of $d_1$ and $d_2$ and it reaches the maximum when
$d_1=d_2=0$.
The forth inequality holds since the target  can be viewed  as a
function of $a$ and it reaches the maximum when
$a=\max\{0,1-\frac{2}{n}\}$.
The last inequality holds since the target function is increasing
and $n\leqslant \lfloor 2/\epsilon \rfloor$ .

\textbf{Subcase 1.2: $F_{p_1}= F_{p_2}$. }

In this subcase, we may assume that
$\overline{\Delta}_1=F_{p_1}=F_{p_2}=:F_p$.
Then  $\sum d_i\alpha_i$ can be estimated by the multiplicity
inequalities (\ref{mult51}) and (\ref{mult52}). We have
\begin{align*}
{}&\sum d_i\alpha_i\\
= {}& \left(\sum d_i \overline{\Delta}_i\right).F_p
=   \Big(\sum_{i\neq 1} d_i \overline{\Delta}_i\Big).F_{p}\\
\geqslant {}& \mbox{mult}_{p_1}\Big(\sum_{i\neq 1} d_i \overline{\Delta}_i\Big)+\mbox{mult}_{p_2}\Big(\sum_{i\neq 1} d_i \overline{\Delta}_i\Big)\\
= {}& \mbox{mult}_{p_1}\left(\sum d_i \overline{\Delta}_i\right)+\mbox{mult}_{p_2}\left(\sum d_i \overline{\Delta}_i\right)-2d_1\\
= {}& \mbox{mult}_{p_1}(\overline{\Delta})+\mbox{mult}_{p_2}(\overline{\Delta})-2d_1\\
= {}&\mbox{mult}_{p_1}(\overline{\Delta})+\mbox{mult}_{p_2}(\Delta^\prime)-2d_1\\
\geqslant {}& 2-e_1-e_2-2d_1.
\end{align*}
Hence
\begin{align}\label{512a}
\sum d_i\alpha_i \geqslant \max\{0,2-e_1-e_2-2d_1\},
\end{align}
and
\begin{align}\label{512b}
\sum d_i\beta_i \geqslant d_1\beta_1=d_1 .
\end{align}
Since $K_X+\Delta$ is anti-nef,
\begin{align}\label{5121}
0\geqslant (K_X+\Delta).S_n^{\prime\prime}=
(K_{\mathbb{F}_n}+\overline{\Delta}).S_n=n-2-na+\sum d_i
\beta_i\geqslant n-2-na+d_1;
\end{align}
\begin{align}\label{5122}
0\geqslant (K_X+\Delta).F_p^{\prime\prime}=
(K_{\mathbb{F}_n}+\overline{\Delta}).F_p+e_1+e_2=-2+a+\sum d_i
\alpha_i+e_1+e_2\geqslant a-2d_1,
\end{align}
where $S_n^{\prime\prime}$ (resp. $F_p^{\prime\prime}$) is the
strict transform of $S_n$ (resp. $F_p$) on $X$. The second
inequalities of (\ref{5121}) and (\ref{5122}) follow from
(\ref{512b}) and (\ref{512a}) respectively. Hence
$$
\frac{n-2+d_1}{n}\leqslant a \leqslant 2d_1.
$$
This implies
$$
d_1\geqslant \frac{n-2}{2n-1},
$$
and
$$
a\geqslant \frac{2n-4}{2n-1}.
$$
Also by assumption $a\leqslant 1-\epsilon$, hence
\begin{align}\label{512n}
n\leqslant \lfloor(3+\epsilon)/2\epsilon \rfloor.
\end{align}
Now we can estimate the volume. We have
\begin{align*}
{}&(K_X+\Delta)^2\\
= {}& (K_{\mathbb{F}_n}+\overline{\Delta})^2-e_1^2-e_2^2\\
= {}& n\left(-2+a+\sum d_i\alpha_i\right)^2+2\left(-2+a+\sum
d_i\alpha_i\right)\left(n-2-na+\sum
d_i\beta_i\right)-e_1^2-e_2^2\\
\leqslant {}& n(-2+a+\max\{0,2-e_1-e_2-2d_1\})^2\\
 {}&+2(-2+a+\max\{0,2-e_1-e_2-2d_1\})(n-2-na+d_1)-e_1^2-e_2^2\\
\leqslant {}& n(-2+a )^2+2(-2+a )(n-2-na+d_1)-2(1-d_1)^2\\
\leqslant {}& n(-2+a )^2+2(-2+a )(n-2-na+\frac{a}{2})-2(1-\frac{a}{2})^2 \\
\leqslant {}&
\begin{cases}
n+\frac{5}{2}+\frac{9}{4n-2} & \mbox{if } n\geqslant 2\\
6  & \mbox{if } n=0,1
\end{cases}\\
\leqslant {}& \lfloor (3+\epsilon)/2\epsilon
\rfloor+\frac{5}{2}+\frac{9}{4\lfloor (3+\epsilon)/2\epsilon
\rfloor-2}.
\end{align*}

The first inequality follows from (\ref{51}), (\ref{52}), (\ref{512a})
and (\ref{512b}).
The second inequality holds since the target can be viewed  as a
function of $e_1$ and $e_2$ and it reaches the maximum when
$e_1=e_2=1-d_1$.
The third inequality holds since the target  can be viewed  as a
function of $d_1$ and it reaches the maximum when $d_1=\frac{a}{2}$.
The forth inequality holds since the target  can be viewed  as a
function of $a$ and it reaches the maximum when
$a=\max\{0,\frac{2n-4}{2n-1}\}$.
The last inequality holds since the target function is increasing
and $n\leqslant \lfloor(3+\epsilon)/2\epsilon \rfloor$ by
(\ref{512n}).

\begin{remark}
When $\epsilon$ is small enough, it is easy to see that
$$
\lfloor(3+\epsilon)/2\epsilon \rfloor+\frac{5}{2}+\frac{9}{4\lfloor
(3+\epsilon)/2\epsilon \rfloor-2}< \lfloor 2/\epsilon \rfloor+
2+\frac{4}{\lfloor 2/\epsilon \rfloor},
$$
but in general the inequality does not always hold, for example when
$\epsilon=\frac{3}{5}$.
\end{remark}

\textbf{Case 2: $p_2\in E_1$. }

There are two subcases: $p_2\in E_1\cap F_{p_1}^\prime$ or $p_2\in
E_1 \backslash F_{p_1}^\prime$, where $F_{p_1}^\prime$ is the strict
transform of $F_{p_1}$ on $X_1$.

\textbf{Subcase 2.1: $p_2\in E_1\cap F_{p_1}^\prime$. }

In this subcase, we may assume that $\overline{\Delta}_1=F_{p_1}$.
Then  $\sum d_i\alpha_i$ can be estimated by the multiplicity
inequalities (\ref{mult51}) and (\ref{mult52}). We have
\begin{align*}
{}&\sum d_i\alpha_i \\
= {}& \left(\sum d_i \overline{\Delta}_i\right).F_{p_1} =
\Big(\sum_{i\neq 1} d_i \overline{\Delta}_i\Big).F_{p_1}
= \phi_1^*\Big(\sum_{i\neq 1} d_i \overline{\Delta}_i\Big).F_{p_1}^\prime\\
= {}& \Big(\sum_{i\neq 1} d_i \Delta_i^\prime+\mbox{mult}_{p_1}\Big(\sum_{i\neq 1} d_i \overline{\Delta}_i\Big)E_1\Big).F_{p_1}^\prime\\
\geqslant {}& \mbox{mult}_{p_2}\Big(\sum_{i\neq 1} d_i \Delta_i^\prime\Big)+\mbox{mult}_{p_1}\Big(\sum_{i\neq 1} d_i \overline{\Delta}_i\Big)\\
= {}& \mbox{mult}_{p_2}\left(\sum  d_i \Delta_i^\prime\right)+\mbox{mult}_{p_1}\left(\sum  d_i \overline{\Delta}_i\right)-2d_1\\
= {}& \mbox{mult}_{p_2}\left(\sum  d_i \Delta_i^\prime\right)+\left(\mbox{mult}_{p_1}\left(\sum  d_i \overline{\Delta}_i\right)-1+e_1\right)-2d_1+1-e_1\\
= {}& \mbox{mult}_{p_2}\left(\sum  d_i \Delta_i^\prime+\left(\mbox{mult}_{p_1}\left(\sum  d_i \overline{\Delta}_i\right)-1+e_1\right)E_1\right)-2d_1+1-e_1\\
= {}& \mbox{mult}_{p_2}( \Delta^\prime )-2d_1+1-e_1\\
\geqslant {}& (1-e_2 )-2d_1+1-e_1\\
= {}& 2-e_1-e_2-2d_1.
\end{align*}
The rest follows exactly as in Subcase 1.2.

\textbf{Subcase 2.2: $p_2\in E_1 \backslash F_{p_1}^\prime$. }

Then $\sum d_i\alpha_i$ can be estimated by the multiplicity
inequalities (\ref{mult51}) and (\ref{mult52}). We have
\begin{align*}
 {}&\sum d_i\alpha_i \\
= {}& \left(\sum d_i \overline{\Delta}_i\right).F_{p_1} =
\Big(\sum_{i\neq 1} d_i \overline{\Delta}_i\Big).F_{p_1}
\geqslant  \mbox{mult}_{p_1}\Big(\sum_{i\neq 1} d_i \overline{\Delta}_i\Big)\\
= {}& \mbox{mult}_{p_1}(\overline{\Delta})-d_1,
\end{align*}
and
\begin{align*}
{}&\sum d_i\alpha_i\\
= {}&\left(\sum d_i \overline{\Delta}_i\right).F_{p_1}
=  \Big(\sum_{i\neq 1} d_i \overline{\Delta}_i\Big).F_{p_1}\\
= {}& \Big(\sum_{i\neq 1} d_i \Delta_i^\prime\Big).\phi_1^*F_{p_1}
=  \Big(\sum_{i\neq 1} d_i \Delta_i^\prime\Big).(F_{p_1}^\prime+E_1)\\
\geqslant {}& \Big(\sum_{i\neq 1} d_i \Delta_i^\prime\Big). E_1
\geqslant \mbox{mult}_{p_2}\Big(\sum_{i\neq 1} d_i \Delta_i^\prime\Big)\\
\geqslant {}& \mbox{mult}_{p_2}\left(\sum d_i \Delta_i^\prime+(\mbox{mult}_{p_1}(\overline{\Delta})-1+e_1)E_1\right)-(\mbox{mult}_{p_1}(\overline{\Delta})-1+e_1)\\
= {}& \mbox{mult}_{p_2}(\Delta^\prime)-(\mbox{mult}_{p_1}(\overline{\Delta})-1+e_1)\\
\geqslant {}& 2-e_2-e_1-\mbox{mult}_{p_1}(\overline{\Delta}).
\end{align*}
Combining the above two inequalities together, we have
\begin{align*}
{}&\sum d_i\alpha_i \\
\geqslant {}&\frac{1}{2}(\mbox{mult}_{p_1}(\overline{\Delta})-d_1)+
\frac{1}{2}(2-e_2-e_1-\mbox{mult}_{p_1}(\overline{\Delta}))\\
= {}& \frac{2-e_1-e_2-d_1}{2}.
\end{align*}
Hence
\begin{align}\label{522a}
\sum d_i\alpha_i \geqslant \max\left\{0,
\frac{2-e_1-e_2-d_1}{2}\right\},
\end{align}
and
\begin{align}\label{522b}
\sum d_i\beta_i \geqslant d_1\beta_1=d_1.
\end{align}
Now we can estimate the volume. We have
\begin{align*}
{}&(K_X+\Delta)^2 \\
= {}& (K_{\mathbb{F}_n}+\overline{\Delta})^2-e_1^2-e_2^2\\
= {}& n\left(-2+a+\sum d_i\alpha_i\right)^2+2\left(-2+a+\sum
d_i\alpha_i\right)\left(n-2-na+\sum
d_i\beta_i\right)-e_1^2-e_2^2\\
\leqslant {}& n\Big(-2+a+\max\left\{0, \frac{2-e_1-e_2-d_1}{2}\right\}\Big)^2\\
{}&+2\Big(-2+a+\max\left\{0, \frac{2-e_1-e_2-d_1}{2}\right\}\Big)(n-2-na+d_1)-e_1^2-e_2^2\\
\leqslant {}& n(-2+a )^2+2(-2+a )(n-2-na+d_1)-2(1-\frac{d_1}{2})^2\\
\leqslant {}& n(-2+a )^2+2(-2+a )(n-2-na  )- 2 \\
\leqslant {}&\begin{cases}
n+2+\frac{4}{n} & \mbox{if } n\geqslant 2\\
6  & \mbox{if } n=0,1
\end{cases}\\
\leqslant {}& \lfloor 2/\epsilon \rfloor+ 2+\frac{4}{\lfloor
2/\epsilon \rfloor}.
\end{align*}

The first inequality follows from (\ref{51}), (\ref{52}), (\ref{522a})
and (\ref{522b}).
The second inequality holds since  the target  can be viewed  as a
function of $e_1$ and $e_2$ and it reaches the maximum when
$e_1=e_2=1-\frac{d_1}{2}$.
The third inequality holds since the target  can be viewed  as a
function of $d_1$ and it reaches the maximum when $d_1= 0$.
The forth inequality holds since the target  can be viewed  as a
function of $a$ and it reaches the maximum when
$a=\max\{0,1-\frac{2}{n}\}$.
The last inequality holds since the target function is increasing
and $n\leqslant \lfloor 2/\epsilon \rfloor$.

Finally we can see that the bound is optimal by two examples.

\begin{example}
Take $X$ to be the blow-up of $\mathbb{F}_n$ at $p_1$ and $p_2$,
where $n=\lfloor 2/\epsilon \rfloor$, $p_1, p_2\not\in S_n$ and
$F_{p_1}\neq F_{p_2}$. Then $(X, (1-\frac{2}{n})S_n^\prime)$ is a
weak log del Pezzo surface (cf. Section 7, Theorem
\ref{example2}(2)), and
$$
\Big(K_X+\Big(1-\frac{2}{n}\Big)S_n^\prime\Big)^2=\lfloor 2/\epsilon
\rfloor+ 2+\frac{4}{\lfloor 2/\epsilon \rfloor},
$$
where $S_n^\prime$ is the strict transform of $S_n$ on $X$.
\end{example}
\begin{example}
Take $X$ to be the blow-up of $\mathbb{F}_n$ at $p_1$ and $p_2$,
where $n =\lfloor (3+\epsilon)/2\epsilon \rfloor$, $p_1, p_2\not\in
S_{n}$ and $F_{p_1}=F_{p_2}=F$. Then $(X,
\frac{2n-4}{2n-1}S_{n}^\prime+\frac{ n-2}{2n-1}F^\prime)$ is a weak
log del Pezzo surface (cf. Section 7, Theorem \ref{example1}), and
$$
\Big(K_X+\frac{2n-4}{2n-1}S_{n}^\prime+\frac{
n-2}{2n-1}F^\prime\Big)^2=\lfloor (3+\epsilon)/2\epsilon
\rfloor+\frac{5}{2}+\frac{9}{4\lfloor (3+\epsilon)/2\epsilon
\rfloor-2},
$$
where $S_{n}^\prime$ (resp. $F^\prime$) is the strict transform of
$S_{n}$ (resp. $F$) on $X$.
\end{example}

\begin{remark}
By this method, we can calculate the optimal bounds when
$\rho({X_\textrm{min}})$ gets higher, but it is getting more and
more complicated.
\end{remark}

\section{Blowing up points on $\mathbb{F}_n$ in general position}
By Lemma \ref{rat lem}, almost all smooth weak log del Pezzo
surfaces come from blowing up the Hirzebruch surfaces. So we
consider the case when blowing up the Hirzebruch surfaces at points
 in general position.
In this section we are going to prove the following theorem. It is
actually a generalization of Section 5, Subcase 1.1.
\begin{thm}\label{general blowup}
Let $X$ be a smooth surface which is given by blowing up $k$ points
on $\mathbb{F}_n$. Assume that these points are not on $S_n$, and no
two of them are on the same fiber  of $\mathbb{F}_n \rightarrow \mathbb{P}^1$. Assume that $-(K_X+\Delta)$ is
nef and big, where $\Delta$ is an effective $\mathbb{R}$-divisor. Then the anti-canonical volume
${\rm Vol}(-(K_X+\Delta))=(K_X+\Delta)^2$ satisfies
$$
(K_X+\Delta)^2\leqslant\begin{cases}
n+4+\frac{4}{n}-k & \mbox{if } n\geqslant 2;\\
8-k  & \mbox{if } n=0,1.
\end{cases}
$$
\end{thm}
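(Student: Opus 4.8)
The plan is to carry out the two-point argument of Section~5, Subcase~1.1, with $k$ points in place of two. First I would fix notation exactly as there: let $\phi\colon X\to\mathbb{F}_n$ be the blow-up at $p_1,\dots,p_k$ with exceptional curves $E_1,\dots,E_k$, and write $\overline{\Delta}=\phi_*\Delta=aS_n+\sum_i d_i\overline{\Delta}_i$ with $\overline{\Delta}_i\equiv\alpha_i h+\beta_i f$, $\alpha_i,\beta_i\geqslant 0$. By the Negativity Lemma, as in \eqref{star},
$$
K_X+\Delta=\phi^*(K_{\mathbb{F}_n}+\overline{\Delta})+\sum_{j=1}^k e_jE_j,\qquad e_j\geqslant 0,
$$
together with the multiplicity inequalities $\mbox{mult}_{p_j}(\overline{\Delta})-1+e_j\geqslant 0$ for each $j$. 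Since $K_X+\Delta$ is anti-nef, so is $K_{\mathbb{F}_n}+\overline{\Delta}$, and intersecting with $S_n$ and with a fiber $f$ gives $n-2-na+\sum_i d_i\beta_i\leqslant 0$ and $-2+a+\sum_i d_i\alpha_i\leqslant 0$. Because the $E_j$ are disjoint $(-1)$-curves, the volume splits as $(K_X+\Delta)^2=(K_{\mathbb{F}_n}+\overline{\Delta})^2-\sum_j e_j^2$.

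The crux, and the only genuinely new geometric input beyond Subcase~1.1, is to extract one multiplicity constraint from each of the $k$ fibers. Here both hypotheses enter: since no two $p_j$ lie on a common fiber, the fibers $F_{p_1},\dots,F_{p_k}$ are distinct and I may assume $\overline{\Delta}_j=F_{p_j}$ for $j=1,\dots,k$; since no $p_j$ lies on $S_n$, the section $S_n$ contributes nothing at the $p_j$. Intersecting $\overline{\Delta}$ with $F_{p_j}$, discarding its $F_{p_j}$-component, and applying the multiplicity inequality of Section~2 gives, for every $j$,
$$
\sum_i d_i\alpha_i=\Big(\sum_{i\neq j}d_i\overline{\Delta}_i\Big).F_{p_j}\geqslant \mbox{mult}_{p_j}(\overline{\Delta})-d_j\geqslant 1-e_j-d_j ,
$$
whence $\sum_i d_i\alpha_i\geqslant\max\{0,1-e_1-d_1,\dots,1-e_k-d_k\}$; and since each $F_{p_j}\equiv f$ has $\beta_j=1$, $\sum_i d_i\beta_i\geqslant\sum_{j=1}^k d_j$. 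These are precisely the $k$-fold analogues of \eqref{511a}--\eqref{511b}.

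With these in hand I would run the same chain of estimates as in Subcase~1.1. Starting from
$(K_{\mathbb{F}_n}+\overline{\Delta})^2=n(-2+a+\sum_i d_i\alpha_i)^2+2(-2+a+\sum_i d_i\alpha_i)(n-2-na+\sum_i d_i\beta_i)$ and substituting the two lower bounds above (legitimate because the expression is monotone in $\sum d_i\alpha_i$ and $\sum d_i\beta_i$ on the anti-nef range), I reduce to maximizing
$$
n(-2+a)^2+2(-2+a)\Big(n-2-na+\sum_j d_j\Big)-\sum_j e_j^2
$$
successively over the $e_j$ (maximum at $e_j=1-d_j$, which forces the max-term to $0$ and turns $-\sum e_j^2$ into $-\sum(1-d_j)^2$), then over the $d_j$ (maximum at $d_j=0$, each fiber contributing $-1$ and so producing the total $-k$), and finally over $a$. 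What remains is the quadratic $8+(2n-4)a-na^2-k$, which by the computation of Case~2 is at most $n+4+\tfrac4n-k$ when $n\geqslant 2$ (attained at $a=1-\tfrac2n$) and at most $8-k$ when $n=0,1$, yielding the stated bound.

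The step I expect to be delicate is the optimization over the $d_j$ and $a$, precisely because $\Delta$ is only assumed effective, not a boundary: a priori $a$ may exceed $1$, and then the naive monotonicity ``maximum at $d_j=0$'' breaks (the per-fiber optimum slides to $d_j=a-1>0$). To control this I would invoke one further consequence of nef-ness on $X$ itself. Intersecting $-(K_X+\Delta)$ with the strict transform $F_{p_j}''=\phi^*F_{p_j}-E_j$ gives $-2+a+\sum_i d_i\alpha_i+e_j\leqslant 0$, that is $e_j\leqslant 2-a-\sum_i d_i\alpha_i$; combining this with $e_j\geqslant 1-\mbox{mult}_{p_j}(\overline{\Delta})$ and $\sum_i d_i\alpha_i\geqslant\mbox{mult}_{p_j}(\overline{\Delta})-d_j$, the multiplicities cancel and one obtains $a\leqslant 1+d_j$ for every $j$. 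Together with $\sum_j d_j\leqslant na+2-n$ this caps $a$ and confines the joint maximum to the region where the chain above closes; the remaining book-keeping, checking that the cap keeps the optimum at $a=1-\tfrac2n$, $d_j=0$ and hence the bound exactly $n+4+\tfrac4n-k$, is the one computation not already contained in the two-point case, everything else being the symmetric repetition of Subcase~1.1 over $k$ terms.
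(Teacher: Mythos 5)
Your proposal is, step for step, the paper's own proof: the same reduction via (\ref{star}), the same estimates (\ref{6a}) and (\ref{6b}), and the same three-stage optimization over the $e_j$, then the $d_j$, then $a$. Unfortunately it therefore also reproduces, with the same one-line justification, the step that actually fails: the claim that the target, viewed as a function of $(e_1,\dots,e_k)$, attains its maximum at $e_j=1-d_j$. The $k$ multiplicity constraints bound the \emph{single} quantity $\sum_i d_i\alpha_i$ from below by a maximum, not a sum; so forcing the max-term to zero by taking every $e_j=1-d_j$ gains an amount in the quadratic part $n(-2+a+M)^2+2(-2+a+M)(n-2-na+\sum_j d_j)$ that is bounded independently of $k$, while it costs $\sum_j(1-d_j)^2$, which grows like $k$. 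For $k$ large the trade goes the wrong way. Concretely: on $\mathbb{F}_2$ let $D\in|h|$ be an irreducible section (so $D^2=2$ and $D\cap S_2=\emptyset$), take $k=7$ distinct points on $D$ (they are automatically off $S_2$ and pairwise on distinct fibers), and let $\Delta=D^\prime$ be the strict transform. Then
$$
K_X+\Delta=\phi^*(K_{\mathbb{F}_2}+D)=-\phi^*h,
$$
so $-(K_X+\Delta)$ is nef and big with $(K_X+\Delta)^2=h^2=2$, whereas the claimed bound is $n+4+\tfrac{4}{n}-k=1$. At this configuration $a=0$, all $e_j=d_j=0$, $\sum_i d_i\alpha_i=1$, all of (\ref{61}), (\ref{62}), (\ref{6a}), (\ref{6b}) hold, the first inequality of the chain is an equality (both sides equal $2$), and the ``maximum at $e_j=1-d_j$'' step asserts $2\leqslant 1$. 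So this is not a reparable slip in bookkeeping: one curve of small degree through all $k$ points satisfies every multiplicity constraint simultaneously, the statement itself fails for such special configurations, and any correct version must invoke a general-position hypothesis (which is exactly what excludes many points on a single low-degree curve). The paper's own proof contains the identical error, so your write-up is faithful to it; but neither closes this step.

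The one place where you go beyond the paper---worrying that $a>1$ is possible because $\Delta$ is merely effective, and deriving $a\leqslant 1+d_j$ by intersecting $-(K_X+\Delta)$ with the strict transforms of the fibers---is a correct observation about a genuine secondary gap: the ``maximum at $d_j=0$'' step needs $a\leqslant 1$, which is automatic for boundaries but not here, and your cancellation argument for $a\leqslant 1+d_j$ is valid. However, these added constraints do not touch the fatal step above: in the counterexample one has $a\leqslant 1+d_j$, $e_j\leqslant 2-a-\sum_i d_i\alpha_i$ and $\sum_j d_j\leqslant na+2-n$, and the conclusion still fails.
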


\begin{remark}
By Theorem \ref{example2}, the bound given above is optimal for
points in general position.
\end{remark}

Since the volume of a nef and big divisor is always positive, we can
tell when such $X$ is of weak log del Pezzo type.

\begin{cor}\label{cor}
Let $X$ be a smooth surface which is given by blowing up at $k$
points on $\mathbb{F}_n$. Assume that these points are not on $S_n$,
and no two of them are on the same fiber  of $\mathbb{F}_n \rightarrow \mathbb{P}^1$.  Assume that there exists
an effective $\mathbb{R}$-divisor $\Delta$ on $X$ such that $-(K_X+\Delta)$ is nef and big.
Then
$$
k\leqslant\begin{cases}
n+4 & \mbox{if } n\geqslant 4;\\
n+5  & \mbox{if } n=2,3;\\
7  & \mbox{if } n=0,1.
\end{cases}
$$
\end{cor}

\begin{proof}[Proof of Theorem \ref{general blowup}]
Let $\phi: X\rightarrow \mathbb{F}_n$ be the blow-up at $p_1, p_2,
\ldots, p_k \in \mathbb{F}_n$ and $E_1, E_2, \ldots, E_k\subset X$
be the exceptional curves and $\overline{\Delta}$ be the
push-forward of $\Delta$. Write $\overline{\Delta}=aS_n+\sum
d_i\overline{\Delta}_i$ where $\overline{\Delta}_i$ is an
irreducible curve different from $S_n$ and
$\overline{\Delta}_i\equiv \alpha_i h+\beta_i f$ with
$\alpha_i,\beta_i \geqslant 0$.
Then by (\ref{star}), we have
$$
K_X+\Delta=\phi^*(K_{\mathbb{F}_n}+\overline{\Delta})+\sum_{j=1}^ke_jE
_j$$ with $e_j\geqslant 0$ for $1\leqslant j \leqslant k$. Hence
$$
\Delta=aS_n^\prime+\sum
d_i\Delta_i+\sum_{j=1}^k(\mbox{mult}_{p_j}(\overline{\Delta})-1+e_j)E_j,
$$
where $S_n^\prime$ (resp. $\Delta_i$) is the strict transform of
$S_n$ (resp. $\overline{\Delta}_i$) on $X$. Hence in this setting
\begin{align}\label{mult6}
\mbox{mult}_{p_j}(\overline{\Delta})-1+e_j\geqslant 0
\end{align}
for $1\leqslant j \leqslant k$.
Since $K_X+\Delta$ is anti-nef, so is
$K_{\mathbb{F}_n}+\overline{\Delta}$. Hence
\begin{align}\label{61}
0\geqslant (K_{\mathbb{F}_n}+\overline{\Delta}).S_n=n-2-na+\sum d_i
\beta_i  ;
\end{align}
\begin{align}\label{62}
0\geqslant (K_{\mathbb{F}_n}+\overline{\Delta}).f= -2+a+\sum d_i
\alpha_i  .
\end{align}
Now denote the fiber in $\mathbb{F}_n$ passing through $p_j$ as
$F_{p_j}$  for $1\leqslant j \leqslant k$. And we may assume that
$\overline{\Delta}_j=F_{p_j}$ for $1\leqslant j \leqslant k$.
Then for $1\leqslant j \leqslant k$,  $\sum d_i\alpha_i$ can be
estimated by the multiplicity inequality (\ref{mult6}). We have
\begin{align*}
{}&\sum d_i\alpha_i \\
= {}& \left(\sum d_i \overline{\Delta}_i\right).F_{p_j}
=  \Big(\sum_{i\neq j} d_i \overline{\Delta}_i\Big).F_{p_j}\\
\geqslant {}& \mbox{mult}_{p_j}\Big(\sum_{i\neq j} d_i
\overline{\Delta}_i\Big)
=  \mbox{mult}_{p_j}\left(\sum d_i \overline{\Delta}_i\right)-d_j\\
= {}& \mbox{mult}_{p_j}(\overline{\Delta})-d_j\\
\geqslant {}& 1-e_j-d_j.
\end{align*}
Hence
\begin{align}\label{6a}
\sum d_i\alpha_i \geqslant \max_{1\leqslant j \leqslant k}\{0,
1-e_j-d_j\},
\end{align}
and
\begin{align}\label{6b}
\sum d_i\beta_i \geqslant \sum_{j=1}^k d_j\beta_j=\sum_{j=1}^k d_j.
\end{align}
Now we can estimate the volume. We have
\begin{align*}
{}&(K_X+\Delta)^2 \\
= {}& (K_{\mathbb{F}_n}+\overline{\Delta})^2-\sum_{j=1}^ke_j^2\\
= {}& n\left(-2+a+\sum d_i\alpha_i\right)^2+2\left(-2+a+\sum
d_i\alpha_i\right)\left(n-2-na+\sum
d_i\beta_i\right)-\sum_{j=1}^ke_j^2\\
\leqslant {}& n(-2+a+\max_{1\leqslant j \leqslant k}\{0,
1-e_j-d_j\})^2\\
{}&+2(-2+a+\max_{1\leqslant j \leqslant k}\{0, 1-e_j-d_j\})\Big(n-2-na+\sum_{j=1}^k d_j\Big)-\sum_{j=1}^ke_j^2\\
\leqslant {}& n(-2+a )^2+2(-2+a )\Big(n-2-na+\sum_{j=1}^k d_j\Big)-\sum_{j=1}^k(1-d_j)^2\\
\leqslant {}& n(-2+a )^2+2(-2+a )(n-2-na  )- k\\
\leqslant {}& \begin{cases}
n+4+\frac{4}{n}-k & \mbox{if } n\geqslant 2;\\
8-k  & \mbox{if } n=0,1.
\end{cases}
\end{align*}

The first inequality follows from (\ref{61}), (\ref{62}), (\ref{6a}) and
(\ref{6b}).
The second inequality holds since the target  can be viewed  as a
function of $e_j$ for all $1\leqslant j \leqslant k$ and it reaches
the maximum when $e_j= 1-d_j$ for all $1\leqslant j \leqslant k$.
The third inequality holds since the target  can be viewed  as a
function of $d_j$ for all $1\leqslant j \leqslant k$ and it reaches
the maximum when $d_j= 0$  for all $1\leqslant j \leqslant k$.
The last inequality holds since the target  can be viewed  as a
function of $a$ and it reaches the maximum when
$a=\max\{0,1-\frac{2}{n}\}$.
\end{proof}

\section{Examples}
In this section we give some examples of smooth weak log del Pezzo
surfaces which make the above inequalities optimal.

\begin{thm}\label{example1}
Let $X$ be the blow-up of $\mathbb{F}_n$ at $p_1$ and $p_2$ for some
$n\geqslant2$, where
 $p_1, p_2\not\in S_{n}$ and $F_{p_1}=F_{p_2}=F$. Then $(X,
\frac{2n-4}{2n-1}S_{n}^\prime+\frac{ n-2}{2n-1}F^\prime)$ is a weak
log del Pezzo surface, where $S_{n}^\prime$ (resp. $F^\prime$) is
the strict transform of $S_{n}$ (resp. $F$) on $X$.
\end{thm}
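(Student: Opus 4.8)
The plan is to verify directly that $-(K_X + D)$ is nef and big, where $D = \frac{2n-4}{2n-1}S_n^\prime + \frac{n-2}{2n-1}F^\prime$. Since $X$ is smooth and $D$ is an effective $\mathbb{R}$-divisor with coefficients in $[0,1)$ (note $\frac{2n-4}{2n-1} < 1$ and $\frac{n-2}{2n-1} < 1$ for all $n \geq 2$), the pair is automatically a log pair, so the content of the statement is entirely the nefness and bigness of the anti-log-canonical class. **First I would** set up the intersection theory on $X$. Let $\phi : X \to \mathbb{F}_n$ be the blow-up at $p_1, p_2$ with exceptional curves $E_1, E_2$; since $F_{p_1} = F_{p_2} = F$, the strict transform satisfies $F^\prime = \phi^*F - E_1 - E_2$, while $S_n^\prime = \phi^* S_n$ because $p_1, p_2 \notin S_n$. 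I would express $K_X = \phi^* K_{\mathbb{F}_n} + E_1 + E_2$ and compute $-(K_X + D)$ as an explicit combination of $\phi^* h$, $\phi^* f$, $E_1$, $E_2$, using the relations $\phi^* f = F^\prime + E_1 + E_2$ and $S_n \equiv h - nf$.

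**For nefness**, the key step is to identify the relevant extremal curves and check the intersection number against each is nonnegative. On a rational surface obtained by blowing up $\mathbb{F}_n$, the candidate negative curves are $S_n^\prime$, $F^\prime$, and the exceptional curves $E_1, E_2$, together with the fibers through $p_1$ or $p_2$ (which here coincide). I would compute $-(K_X+D).S_n^\prime$, $-(K_X+D).F^\prime$, $-(K_X+D).E_1$, and $-(K_X+D).E_2$, and check each is $\geq 0$. The particular coefficients $\frac{2n-4}{2n-1}$ and $\frac{n-2}{2n-1}$ are precisely the solution of the linear system forcing the intersection numbers against $S_n^\prime$ and $F^\prime$ to vanish (this is exactly the boundary case saturating inequalities \eqref{5121} and \eqref{5122} in Subcase 1.2), so I expect $-(K_X+D).S_n^\prime = -(K_X+D).F^\prime = 0$ and $-(K_X+D).E_j > 0$. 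To conclude nefness globally, I would then argue that any irreducible curve $C$ with $-(K_X+D).C < 0$ would have to be one of these finitely many negative curves — for instance by noting that $-(K_X+D)$ is a nonnegative combination of nef classes plus the curves already checked, or by writing it as $\phi^*(-(K_{\mathbb{F}_n} + \overline{\Delta}))$ minus effective exceptional contributions and invoking that $-(K_{\mathbb{F}_n}+\overline{\Delta})$ is nef on $\mathbb{F}_n$ whose nef cone is spanned by $f$ and $h$.

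**For bigness**, since $-(K_X+D)$ is nef, it suffices to show $(K_X+D)^2 > 0$; I would simply compute this self-intersection, which by the volume calculation in Subcase 1.2 equals $n + \frac{5}{2} + \frac{9}{4n-2}$, manifestly positive for $n \geq 2$. **The main obstacle** I anticipate is not any single computation but rather the global nefness argument: verifying $-(K_X+D).C \geq 0$ on the finite list of obvious negative curves is routine, but rigorously ruling out the existence of other curves of negative self-intersection requires a clean structural argument. The cleanest route is to push the problem down to $\mathbb{F}_n$: write $-(K_X + D) = \phi^*(-(K_{\mathbb{F}_n}+\overline{\Delta})) - \sum e_j E_j$ where $\overline{\Delta} = \frac{2n-4}{2n-1}S_n + \frac{n-2}{2n-1}F$ is its push-forward, check directly that $-(K_{\mathbb{F}_n}+\overline{\Delta})$ is nef on $\mathbb{F}_n$ (only two intersections, against $f$ and $S_n$, to verify), and then handle the exceptional contributions separately on the curves meeting $E_1, E_2$. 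I would prefer this approach since it localizes all the curve-by-curve checking to the two easily-controlled classes on $\mathbb{F}_n$.
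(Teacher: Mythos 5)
Your proposal does reach a complete proof, but through the \emph{first} of your two suggested mechanisms, not the one you say you prefer; and that first mechanism is genuinely different from the paper's argument, so let me compare. The paper works entirely downstairs: writing $K_X+D=\phi^*\bigl(K_{\mathbb{F}_n}+\overline{\Delta}\bigr)+\frac{n+1}{2n-1}(E_1+E_2)$, where $D=\frac{2n-4}{2n-1}S_n^\prime+\frac{n-2}{2n-1}F^\prime$ and $\overline{\Delta}=\frac{2n-4}{2n-1}S_n+\frac{n-2}{2n-1}F$, and noting $K_{\mathbb{F}_n}+\overline{\Delta}\equiv-\frac{2n+2}{2n-1}h$, it reduces anti-nefness to the inequality $\mbox{mult}_{p_1}C+\mbox{mult}_{p_2}C\leqslant 2h.C$ (inequality (\ref{ex1})) for \emph{every} irreducible curve $C\subset\mathbb{F}_n$, and proves it in one stroke: for $C\neq S_n,F$ one has $\mbox{mult}_{p_1}C+\mbox{mult}_{p_2}C\leqslant F.C=\alpha\leqslant2(n\alpha+\beta)=2h.C$, using crucially that both points lie on the single fiber $F$. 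Your suggestion of writing $-(K_X+D)$ as a nonnegative combination of nef classes and already-checked curves also works, and avoids all multiplicity estimates: explicitly
\[
-(K_X+D)\equiv\frac{n+1}{2n-1}\bigl(2\phi^*h-E_1-E_2\bigr)\equiv\frac{n+1}{2n-1}\bigl(\phi^*h+(n-1)\phi^*f+S_n^\prime+F^\prime\bigr),
\]
so once you verify $-(K_X+D).S_n^\prime=-(K_X+D).F^\prime=0$ (your observation that the coefficients exactly saturate (\ref{5121}) and (\ref{5122}) is correct), every irreducible curve other than $S_n^\prime,F^\prime$ meets $-(K_X+D)$ nonnegatively because $\phi^*h,\phi^*f$ are nef and the remaining summands are distinct irreducible curves; bigness then follows from $(K_X+D)^2=\frac{2(n+1)^2}{2n-1}=n+\frac{5}{2}+\frac{9}{4n-2}>0$, as you say. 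What your decomposition buys is a purely cone-theoretic proof with a finite check; what the paper's method buys is a template that carries over to Theorem \ref{example2}, where many points are blown up and no such finite decomposition is available.

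The warning concerns the push-down route you say you prefer: as stated it is not a complete argument, and it underestimates the work in exactly the place where the paper does the work. Nefness of $-(K_{\mathbb{F}_n}+\overline{\Delta})$ downstairs is trivial (it is a positive multiple of the nef class $h$), but nefness is \emph{not} preserved under pulling back and subtracting the effective exceptional part: for any non-exceptional irreducible curve $C^\prime$ with image $C$,
\[
-(K_X+D).C^\prime=-\bigl(K_{\mathbb{F}_n}+\overline{\Delta}\bigr).C-\frac{n+1}{2n-1}\bigl(\mbox{mult}_{p_1}C+\mbox{mult}_{p_2}C\bigr),
\]
and the "curves meeting $E_1,E_2$" that you propose to handle separately are not a finite list — they are the strict transforms of all curves through $p_1$ or $p_2$. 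Controlling the multiplicity deficit for this infinite family is precisely inequality (\ref{ex1}), i.e.\ the entire content of the paper's proof, so that route, carried out honestly, simply \emph{is} the paper's argument rather than a shortcut around it. In short: follow your decomposition suggestion (made explicit above) and you are done; follow the push-down route and you still owe the multiplicity inequality for arbitrary curves.
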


\begin{proof}
The only thing to prove is the anti-nefness of
$K_X+\frac{2n-4}{2n-1}S_{n}^\prime+\frac{ n-2}{2n-1}F^\prime$.

Take an irreducible curve $C^\prime$ in $X$, which is not
exceptional over $\mathbb{F}_n$, and take $C$ to be the push-forward
of $C^\prime$. Let $\phi$ be the blow-up. Then
\begin{align*}
{}&\Big(K_X+\frac{2n-4}{2n-1}S_{n}^\prime+\frac{ n-2}{2n-1}F^\prime\Big).C^\prime\\
= {}&\Big(\phi^*\Big(K_{\mathbb{F}_n}+\frac{2n-4}{2n-1}S_{n}+\frac{
n-2}{2n-1}F\Big)+
\frac{n+1}{2n-1}E_1+\frac{n+1}{2n-1}E_2\Big).C^\prime\\
= {}&\Big(K_{\mathbb{F}_n}+\frac{2n-4}{2n-1}S_{n}+\frac{
n-2}{2n-1}F\Big).C+\frac{n+1}{2n-1}\mbox{mult}_{p_1}C+\frac{n+1}{2n-1}\mbox{mult}_{p_2}C\\
={}&-\frac{2n+2}{2n-1}h.C+\frac{n+1}{2n-1}\mbox{mult}_{p_1}C+\frac{n+1}{2n-1}\mbox{mult}_{p_2}C.
\end{align*}
Hence $K_X+\frac{2n-4}{2n-1}S_{n}^\prime+\frac{ n-2}{2n-1}F^\prime$
is anti-nef if and only if for any irreducible curve $C\subset
\mathbb{F}_n$, assuming that $C\equiv \alpha h+\beta f$, we have
\begin{align}\label{ex1}
 \mbox{mult}_{p_1}C+\mbox{mult}_{p_2}C\leqslant 2h.C.
\end{align}

If $C=S_n$ or $F$, then (\ref{ex1}) holds obviously.

If $C\neq S_n$ or $F$, then $\alpha, \beta\geqslant 0$, and
\begin{align*}
\mbox{mult}_{p_1}C+\mbox{mult}_{p_2}C \leqslant {}& F.C =  \alpha
\leqslant  2(n\alpha+\beta) =2h.C.
\end{align*}

We complete the proof.
\end{proof}

\begin{thm}\label{example2}
Let $X$ be a smooth surface which is given by blowing up $k$ points
on $\mathbb{F}_n$, $n\geqslant 2$. Assume that these points are not
on $S_n$, and no two of them are on the same fiber  of $\mathbb{F}_n \rightarrow \mathbb{P}^1$. Denote by $S_{n}^\prime$
the strict transform of $S_{n}$ on $X$. Then

(1) if $k\leqslant n+2$, then $(X,(1-\frac{2}{n})S_n^\prime)$ is a
weak log del Pezzo surface;

(2) if $n+2< k< \frac{1}{n}(n+2)^2$ and the points are in general
position, then $(X,(1-\frac{2}{n})S_n^\prime)$ is a weak log del
Pezzo surface;

(3) if $k\geqslant \frac{1}{n}(n+2)^2$, then $(X,\Delta)$ can not be
a weak log del Pezzo surface for any boundary $\Delta$.
\end{thm}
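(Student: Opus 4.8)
The plan is to handle all three parts through the single class $L:=-(K_X+(1-\tfrac2n)S_n^\prime)$ on $X$. Writing $\phi\colon X\to\mathbb F_n$ for the blow-up at $p_1,\dots,p_k$ with exceptional curves $E_1,\dots,E_k$, and using that the $p_j$ avoid $S_n$ (so $S_n^\prime=\phi^*S_n$), I would first record $K_{\mathbb F_n}+(1-\tfrac2n)S_n\equiv-\tfrac{n+2}{n}h$, whence
\[
L=\tfrac{n+2}{n}\phi^*h-\sum_{j=1}^kE_j,\qquad L^2=\tfrac{(n+2)^2}{n}-k=n+4+\tfrac4n-k .
\]
Note that $L^2$ is independent of the positions of the $p_j$. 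Since a nef and big divisor on a surface has positive self-intersection while, conversely, a nef divisor with positive self-intersection is big, bigness is automatic once nefness is known (for $k<\tfrac1n(n+2)^2$), so for parts (1) and (2) everything reduces to showing $L$ is nef.

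For part (3) I would not argue about this particular $L$, but invoke Theorem \ref{general blowup}, whose hypotheses (centres off $S_n$, no two on a common fibre) are exactly those assumed here. It gives $(K_X+\Delta)^2\le n+4+\tfrac4n-k$ for \emph{any} effective $\Delta$ with $-(K_X+\Delta)$ nef and big. As $k\ge\tfrac1n(n+2)^2=n+4+\tfrac4n$ forces the right-hand side to be $\le0$, this contradicts the positivity of the self-intersection of a nef and big divisor; hence no boundary can work, and, as in the statement, no general-position hypothesis is needed.

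For parts (1) and (2) I would check $L\cdot C\ge0$ curve by curve. Against $E_j$, the strict transforms $F_{p_j}^\prime$ of the fibres through the centres, a general fibre $F$, and $S_n^\prime$, one computes $L\cdot E_j=1$, $L\cdot F_{p_j}^\prime=\tfrac2n$, $L\cdot F=\tfrac{n+2}{n}$ and $L\cdot S_n^\prime=0$, all $\ge0$. The only remaining curves are strict transforms $C$ of irreducible $\bar C\equiv\alpha h+\beta f$ on $\mathbb F_n$ with $\bar C\ne S_n,F$, so $\alpha\ge1$, $\beta\ge0$; writing $m_j=\mathrm{mult}_{p_j}\bar C$ one gets $L\cdot C=\tfrac{n+2}{n}(n\alpha+\beta)-\sum_j m_j$, so nefness is equivalent to
\[
\sum_{j=1}^k m_j\le(n+2)\alpha+\tfrac{n+2}{n}\beta.\qquad(\star)
\]
Because each fibre meets $\bar C$ in $\alpha$ points, the multiplicity inequality of Section 2 gives $m_j\le\bar C\cdot f=\alpha$. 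For part (1), where $k\le n+2$, this already suffices: $\sum_j m_j\le k\alpha\le(n+2)\alpha\le(n+2)\alpha+\tfrac{n+2}{n}\beta$, so $L$ is nef and (as $L^2>0$) big.

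Part (2) is where the general-position hypothesis must genuinely enter: for $k>n+2$ the crude bound $\sum_j m_j\le k\alpha$ can exceed the right-hand side of $(\star)$ when $\beta$ is small, and indeed for special configurations (e.g.\ many centres on one low-degree curve) $(\star)$ really fails, producing a negative curve and destroying nefness. I would control this through the dimension of the linear system: requiring multiplicity $\ge m_j$ at a general centre imposes $\binom{m_j+1}{2}$ conditions on $|\alpha h+\beta f|$, and for general points these are independent, so existence of $\bar C$ forces $\sum_j\binom{m_j+1}{2}\le\dim|\alpha h+\beta f|=(\alpha+1)(\beta+1)+\tfrac{n\alpha(\alpha+1)}{2}-1$. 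When $\alpha=1$ the adjunction formula gives $p_a(\bar C)=0$, so $\bar C$ is smooth, every $m_j\le1$, and the \emph{simple-point} count (which is unconditional) together with the hypothesis $k<\tfrac1n(n+2)^2$ yields $(\star)$ after a short split on $\beta$. The genuine obstacle is the case $\alpha\ge2$, where higher multiplicities occur and one must establish the independence of the multiple-point conditions at general points and then run an elementary but fiddly optimisation—feeding in this dimension bound, the unconditional genus bound $\sum_j\binom{m_j}{2}\le p_a(\bar C)$, the pointwise bound $m_j\le\alpha$, and $k<\tfrac1n(n+2)^2$—to pass from these quadratic estimates to the linear inequality $(\star)$. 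In the bounded range $k<\tfrac1n(n+2)^2$ the relevant multiplicities are small, so I expect the required independence to be verifiable directly rather than via any general-position conjecture; making that verification rigorous is the crux of the argument.
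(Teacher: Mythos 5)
Your treatment of parts (1) and (3) is correct and essentially identical to the paper's: part (3) is exactly the paper's appeal to Theorem \ref{general blowup} (stated there as Corollary \ref{cor}), and part (1) is the paper's Fact (4), i.e.\ the pointwise bound $m_j\leqslant \bar C\cdot f=\alpha$ combined with $k\leqslant n+2$. Your reduction of nefness to the inequality $(\star)$ is also precisely the paper's inequality (\ref{star2}), and your $\alpha=1$ discussion (arithmetic genus $0$, hence simple points only, handled by the unconditional count of how many general points can lie on a member of $|h+\beta f|$) can be made to work.

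The gap is in part (2), where your proposal stops exactly where the paper's real work begins. There are two issues. First, you assert that for general points the multiplicity conditions are independent and that you ``expect the required independence to be verifiable directly''; but independence of multiple-point conditions at general points is not automatic and can genuinely fail — on $\mathbb{P}^2$, five general double points impose only $14<15$ independent conditions on quartics, since the doubled conic through the five points always survives. The paper does not prove such independence: it \emph{builds it into the definition} of general position (condition (2) of the Remark requires the conditions to be linearly independent in $H^0(\mathcal{O}(C))$), and adds conditions (3) and (4) to exclude the borderline configurations where the count yields equality rather than strict inequality. Second, even granting independence, the substantive content of the paper's proof is the Claim: if $k<\frac{(n+2)^2}{n}$, $2\alpha\geqslant\beta+1$ and $(\star)$ fails, then $\sum_i\frac{a_i(a_i+1)}{2}>(\alpha+1)(\beta+1)+\frac{\alpha(\alpha+1)}{2}n$. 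This is proved via the Cauchy--Schwarz bound $\sum a_i^2\geqslant\left(\sum a_i\right)^2/k$ together with a case analysis on $\beta$ (namely $\beta\geqslant n$; $\frac n2\leqslant\beta<n$; $\beta<\frac n2$, the last split further into $n\geqslant4$, $n=3$, $n=2$), including a careful treatment of the equality cases where the extra general-position conditions are invoked. Your proposal explicitly defers exactly this step (``an elementary but fiddly optimisation\dots making that verification rigorous is the crux''), so what you have for part (2) is the correct framework — the same one the paper uses — but with its central estimate and the handling of the equality configurations left unproven.
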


\begin{remark}
We say that $k$ points $p_1,\ldots,p_k \in \mathbb{F}_n$ are
\emph{in general position} if they satisfy the following conditions:

(1) these points are not on $S_n$, and no two of them are on the
same fiber of $\mathbb{F}_n \rightarrow \mathbb{P}^1$;

(2) the condition these points provide to define a curve $C$ is
linearly independent in $H^0(\mathcal{O}(C))$;

(3) if $n\geqslant 4$ and $k=n+4$, there does not exist a curve
$C\subset \mathbb{F}_n$ with $C\equiv \frac{\beta+1}{2}h+\beta f$,
$\beta<\frac{n}{2}$, such that $\mbox{mult}_{p_i}C=\frac{\beta+1}{2}
$ for all $1\leqslant i\leqslant k$;

(4) if $n=3$ and $k\geqslant 7$, no seven of the points lie on a
curve $C\equiv h+f$.
\end{remark}

\begin{proof}[Proof of Theorem \ref{example2}]
(3) is by Corollary \ref{cor}.

For (1)(2), the only thing to prove is the anti-nefness of
$K_X+(1-\frac{2}{n})S_n^\prime$.

Take an irreducible curve $C^\prime$ in $X$, which is not
exceptional over $\mathbb{F}_n$, and take $C$ to be the push-forward
of $C^\prime$. Let $\phi$ be the blow-up at $p_1,\ldots,p_k \in
\mathbb{F}_n$. Then
\begin{align*}
{}&\Big(K_X+\Big(1-\frac{2}{n}\Big)S_n^\prime\Big).C^\prime\\
=
{}&\Big(\phi^*\Big(K_{\mathbb{F}_n}+\Big(1-\frac{2}{n}v)S_n\Big)+\sum_{i=1}^k
E_i\Big).C^\prime\\
=
{}&\Big(K_{\mathbb{F}_n}+\Big(1-\frac{2}{n}\Big)S_n\Big).C+\sum_{i=1}^k\mbox{mult}_{p_i}C\\
={}&-\Big(1+\frac{2}{n}\Big)h.C+\sum_{i=1}^k\mbox{mult}_{p_i}C.
\end{align*}
Hence $K_X+(1-\frac{2}{n})S_n^\prime$ is anti-nef if and only if for
any irreducible curve $C\subset \mathbb{F}_n$, assuming that
$C\equiv \alpha h+\beta f$, we have
\begin{align}\label{star2}
 \sum_{i=1}^k\mbox{mult}_{p_i}C \leqslant \Big(1+\frac{2}{n}\Big)h.C.
\end{align}

There are some easy facts:
\begin{fact}
(1) If $C=S_n$, then (\ref{star2}) holds.

(2) If $C=F_{p_j}$ for some $1\leqslant j \leqslant k$, then
(\ref{star2}) holds.

(3) If $2\alpha \leqslant \beta$ and $k< \frac{1}{n}(n+2)^2$, then
(\ref{star2}) holds.

(4) If $k \leqslant n+2$,  then (\ref{star2}) holds.
\end{fact}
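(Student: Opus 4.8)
The plan is to verify the numerical inequality (\ref{star2}) directly in each of the four cases, using the intersection numbers on $\mathbb{F}_n$ recalled in Section 2 together with the multiplicity inequality recorded there. Throughout I would record, from $h^2=n$, $h.f=1$, $f^2=0$ and $S_n\equiv h-nf$, the identities $h.C=n\alpha+\beta$, $f.C=\alpha$ and $S_n.C=\beta$; thus the right-hand side of (\ref{star2}) is $(1+\tfrac2n)(n\alpha+\beta)=(n+2)\alpha+\tfrac{n+2}{n}\beta$, a form I will compare against in all cases.

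Cases (1) and (2) are one-line computations. For $C=S_n\equiv h-nf$ one has $h.C=0$, so the right-hand side vanishes; since the centers $p_i$ are chosen off $S_n$, every $\mbox{mult}_{p_i}C=0$ and (\ref{star2}) reads $0\leqslant 0$. For $C=F_{p_j}$, the hypothesis that no two centers share a fiber forces $p_j$ to be the only center on $F_{p_j}$, so the left-hand side equals $\mbox{mult}_{p_j}F_{p_j}=1$, while the right-hand side is $(1+\tfrac2n)h.f=1+\tfrac2n>1$.

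For (3) and (4) I would first reduce to the case that $C$ is neither $S_n$ nor any fiber $F_{p_j}$, these being already settled by (1) and (2). For such an irreducible $C$ one gets $\alpha=f.C\geqslant 1$ (an irreducible curve with $f.C=0$ is contained in a fiber, hence is a fiber) and $\beta=S_n.C\geqslant 0$ (distinct irreducible curves meet non-negatively). The engine is the fiberwise multiplicity bound: applying the inequality of Section 2 to $C$ and the reduced fiber $F_{p_i}$ gives $\mbox{mult}_{p_i}C\leqslant C.F_{p_i}=\alpha$ for every $i$, hence $\sum_i\mbox{mult}_{p_i}C\leqslant k\alpha$. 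In case (4), $k\leqslant n+2$ yields $\sum_i\mbox{mult}_{p_i}C\leqslant(n+2)\alpha\leqslant(n+2)\alpha+\tfrac{n+2}{n}\beta$, which is exactly the right-hand side. In case (3), the hypothesis $2\alpha\leqslant\beta$ gives $\tfrac{n+2}{n}\beta\geqslant\tfrac{2(n+2)}{n}\alpha$, so the right-hand side is at least $(n+2)\alpha+\tfrac{2(n+2)}{n}\alpha=\tfrac{(n+2)^2}{n}\alpha$; since $k<\tfrac1n(n+2)^2$ and $\alpha\geqslant 1$, one concludes $\sum_i\mbox{mult}_{p_i}C\leqslant k\alpha<\tfrac{(n+2)^2}{n}\alpha$, giving (\ref{star2}).

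Because everything collapses to these elementary estimates, I do not expect a genuine obstacle; these are exactly the ``easy facts'' feeding the harder general-position analysis in the rest of the proof of Theorem \ref{example2}. The only point that needs care is the implicit restriction $C\neq S_n,F_{p_j}$ in (3) and (4): it is precisely this restriction that guarantees both the sign conditions $\alpha\geqslant 1$, $\beta\geqslant 0$ and the validity of the bound $\mbox{mult}_{p_i}C\leqslant\alpha$, which can fail exactly when $C$ is the fiber through $p_i$.
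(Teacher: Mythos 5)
Your proof is correct and takes essentially the same route as the paper: after setting aside $C=S_n$ and $C=F_{p_j}$, the paper likewise bounds $\sum_i\mbox{mult}_{p_i}C\leqslant\sum_i C.F_{p_i}=k\alpha$ and compares $k\alpha$ with $(n+2)\alpha+\big(1+\frac{2}{n}\big)\beta$ using the hypotheses of (3) and (4) respectively. You merely spell out the details the paper labels ``obvious'' (e.g.\ $h.S_n=0$ and the sign conditions $\alpha\geqslant 1$, $\beta\geqslant 0$), which is fine.
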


\begin{proof}[Proof of Fact]~

(1) Obvious.

(2) Obvious.

Hence from now on we may assume that $C\neq S_n$ and $C\neq F_{p_j}$
for $1\leqslant j \leqslant k$.

(3) We have
\begin{align*}
 {}&\sum_{i=1}^k\mbox{mult}_{p_i}C \\
\leqslant {}& \sum_{i=1}^k C.F_{p_i} =   k\alpha \leqslant
\frac{(n+2)^2}{n}\alpha
\leqslant (n+2)\alpha+\Big(1+\frac{2}{n}\Big)\beta\\
= {}& \Big(1+\frac{2}{n}\Big)h.C.
\end{align*}

(4) We have
\begin{align*}
 {}&\sum_{i=1}^k\mbox{mult}_{p_i}C \\
\leqslant {}& \sum_{i=1}^k C.F_{p_i} =   k\alpha \leqslant
(n+2)\alpha
\leqslant  (n+2)\alpha+\Big(1+\frac{2}{n}\Big)\beta\\
= {}& \Big(1+\frac{2}{n}\Big)h.C.
\end{align*}
\end{proof}

(1) of the theorem comes from Fact (4).

Note that $h^0(\mathbb{F}_n,
\mathcal{O}(C))=(\alpha+1)(\beta+1)+\frac{1}{2}(\alpha+1)\alpha n$.
Denote $\mbox{mult}_{p_i}C$ by $a_i$. To prove (2), we only need to
prove the following claim.

\begin{claim} Assume $k< \frac{(n+2)^2}{n}$, $2\alpha \geqslant \beta
+1$ and
$$
\sum_{i=1}^ka_i> (n+2)\alpha+\Big(1+\frac{2}{n}\Big)\beta.
$$
Then
$$
\sum_{i=1}^k\frac{a_i(a_i+1)}{2}>(\alpha+1)(\beta+1)+\frac{(\alpha+1)\alpha}{2}n
$$
for $p_1, p_2, \ldots, p_k$  in general position.
\end{claim}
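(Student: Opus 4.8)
The claim concerns points $p_1,\dots,p_k \in \mathbb{F}_n$ in general position. Given an irreducible curve $C \equiv \alpha h + \beta f$ with $2\alpha \geqslant \beta + 1$ and multiplicities $a_i = \mathrm{mult}_{p_i}C$, we assume the linear inequality $\sum a_i > (n+2)\alpha + (1+\tfrac{2}{n})\beta$ fails for anti-nefness, and we must derive the quadratic inequality $\sum \tfrac{a_i(a_i+1)}{2} > (\alpha+1)(\beta+1) + \tfrac{(\alpha+1)\alpha}{2}n$. The right-hand side is exactly $h^0(\mathbb{F}_n, \mathcal{O}(C))$, so the quadratic conclusion says that the multiplicity conditions over-determine the linear system $|C|$, contradicting general position (condition (2) of the general-position definition). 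Thus the logical role of this Claim is to rule out the existence of such a $C$, completing the proof of part (2) of Theorem~\ref{example2}.

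**The plan.** The strategy is to show that the quadratic inequality is forced by the linear hypothesis together with $k < \tfrac{(n+2)^2}{n}$ and $2\alpha \geqslant \beta+1$. First I would record the elementary constraints on the $a_i$: each $a_i \leqslant C.F_{p_i} = \alpha$ since no two $p_i$ share a fiber and $C \neq F_{p_j}$, and $\sum a_i \leqslant C.S_n$-type bounds are also available. The key analytic fact is that, subject to a fixed lower bound $\sum a_i = s > (n+2)\alpha + (1+\tfrac{2}{n})\beta$ and the ceiling $a_i \leqslant \alpha$, the convex function $\sum \tfrac{a_i(a_i+1)}{2}$ is minimized when the $a_i$ are as equal as possible. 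So I would first reduce to the worst case where the multiplicities are balanced, estimate $\sum \tfrac{a_i(a_i+1)}{2} \geqslant \tfrac{1}{2}\big(\tfrac{s^2}{k} + s\big)$ by Cauchy–Schwarz (or power-mean), and then it suffices to verify
\begin{align*}
\frac{1}{2}\left(\frac{s^2}{k} + s\right) > (\alpha+1)(\beta+1) + \frac{(\alpha+1)\alpha}{2}n
\end{align*}
whenever $s > (n+2)\alpha + (1+\tfrac{2}{n})\beta$ and $k < \tfrac{(n+2)^2}{n}$.

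**Carrying out the estimate.** Substituting the minimal admissible $s = (n+2)\alpha + (1+\tfrac{2}{n})\beta$ (the left side is increasing in $s$, so this is the binding case) and the maximal $k = \tfrac{(n+2)^2}{n}$ into $\tfrac{s^2}{k}$, the leading term $\tfrac{s^2}{k}$ should produce exactly the dominant quadratic part $n\alpha^2/\cdots$ matching $(1+\tfrac{2}{n})h.C \cdot (\text{something})$, and the hypothesis $2\alpha \geqslant \beta+1$ enters to control the cross terms and the linear correction $\tfrac{s}{2}$ against the lower-order terms $(\alpha+1)(\beta+1) + \tfrac{\alpha}{2}n$. I expect the algebra to close after expanding both sides as quadratic forms in $\alpha$ and $\beta$ and checking the coefficient-wise inequality using $2\alpha \geqslant \beta + 1$ and $\beta \geqslant 0$. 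The main obstacle I anticipate is that the balanced-multiplicity reduction is only clean when $k \mid s$; in general the $a_i$ are integers, so the true minimum of $\sum \tfrac{a_i(a_i+1)}{2}$ under $\sum a_i = s$ is achieved by integers differing by at most one, and I would need the slightly sharper bound $\sum \tfrac{a_i(a_i+1)}{2} \geqslant \tfrac{1}{2}\big(\tfrac{s^2}{k}+s\big)$, which holds with equality only in the balanced case and otherwise strictly. Since we need a strict inequality and the hypothesis on $\sum a_i$ is already strict, this strictness is available, so the integrality issue works in our favor rather than against us.

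**Where general position is invoked.** The phrase ``for $p_1,\dots,p_k$ in general position'' signals that the Claim need only hold generically, and indeed the conclusion $\sum \tfrac{a_i(a_i+1)}{2} > h^0(\mathcal{O}(C))$ is precisely the statement that imposing multiplicity $a_i$ at a general $p_i$ cuts the dimension of $|C|$ by the full expected amount $\tfrac{a_i(a_i+1)}{2}$ at each point, with the counts independent by general-position condition (2). Thus once the inequality of the Claim is established as a pure arithmetic consequence of the hypotheses, the contradiction with general position is immediate: no such irreducible $C$ can exist, every curve satisfies (\ref{star2}), and $K_X + (1-\tfrac{2}{n})S_n^\prime$ is anti-nef, proving part (2).
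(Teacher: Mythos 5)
Your overall framework (Cauchy--Schwarz on $\sum a_i^2 \geqslant (\sum a_i)^2/k$, then compare with $h^0$) is indeed the paper's starting point, but the step you ``expect to close'' does not close, and this is exactly where the real work of the paper lies. Carrying out your proposed substitution of the threshold value $s=(n+2)\alpha+\bigl(1+\tfrac{2}{n}\bigr)\beta$ and the supremum $k=\tfrac{(n+2)^2}{n}$ gives
\begin{equation*}
\frac{1}{2}\left(\frac{s^2}{k}+s\right)-(\alpha+1)(\beta+1)-\frac{(\alpha+1)\alpha}{2}\,n \;=\; \frac{(\beta-n)(\beta+2)}{2n},
\end{equation*}
which is \emph{negative} whenever $\beta<n$. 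So your argument proves the Claim only in the regime $\beta\geqslant n$ (the paper's Case 1). For $\beta<n$ the paper must exploit integrality in a much more substantive way than the generic ``strictness works in our favor'' remark you make: since $\sum a_i$ and $(n+2)\alpha+\beta$ are integers, the strict hypothesis upgrades to $\sum a_i\geqslant (n+2)\alpha+\beta+2$ when $\tfrac{n}{2}\leqslant\beta<n$ and to $\sum a_i\geqslant(n+2)\alpha+\beta+1$ when $\beta<\tfrac{n}{2}$, and in the latter case one must also replace your real supremum $\tfrac{(n+2)^2}{n}$ by the integer bounds $k\leqslant n+4$ (resp.\ $8$, $7$ for $n=3,2$). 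Only with these refinements does the quadratic comparison become nonnegative.

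More seriously, your assertion that the inequality of the Claim can be ``established as a pure arithmetic consequence of the hypotheses'' is false. Take $n\geqslant 4$, $k=n+4$, $C\equiv h+f$ (so $\alpha=\beta=1$, $2\alpha=\beta+1$) and $a_i=1$ for all $i$: every hypothesis holds ($k<\tfrac{(n+2)^2}{n}$ and $\sum a_i=n+4>n+3+\tfrac{2}{n}$), yet $\sum\tfrac{a_i(a_i+1)}{2}=n+4=h^0(\mathbb{F}_n,\mathcal{O}(C))$ --- equality, not the required strict inequality. A similar borderline configuration occurs for $n=3$, $k=8$, $C\equiv h+f$. This is precisely why the paper's definition of ``general position'' includes the ad hoc geometric conditions (3) and (4): they exclude exactly these equality configurations, and they are invoked \emph{inside} the proof of the Claim (Subcases 3.1 and 3.2), not merely afterwards when contradicting $h^0$. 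Your plan, which defers all use of general position to the final contradiction, therefore cannot be completed as stated.
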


Assuming Claim for the moment, suppose that (2) is not true, i.e.
(\ref{star2}) fails. Then
$$
\sum_{i=1}^ka_i=\sum_{i=1}^k\mbox{mult}_{p_i}C>\Big(1+\frac{2}{n}\Big)h.C=(n+2)\alpha+\Big(1+\frac{2}{n}\Big)\beta.
$$
By Claim, we have
$$
\sum_{i=1}^k\frac{a_i(a_i+1)}{2}>(\alpha+1)(\beta+1)+\frac{(\alpha+1)\alpha}{2}n.
$$
Here the left hand side is the number of conditions on $C$, and the
right hand side is $h^0(\mathbb{F}_n, \mathcal{O}(C))$. Hence we
have more conditions than sections, which is a contradiction for
$p_1, p_2, \ldots, p_k$  in general position.
\end{proof}

\begin{proof}[Proof of Claim]~

\textbf{Case 1: $\beta \geqslant n$.}

In this case, we have
\begin{align*}
{}&\sum_{i=1}^k a_i^2 + \sum_{i=1}^k a_i-2(\alpha+1)(\beta+1)-(\alpha+1)\alpha n\\
\geqslant {}&
\frac{((n+2)\alpha+(1+\frac{2}{n})\beta)^2}{k}+(n+2)\alpha+\Big(1+\frac{2}{n}\Big)\beta-2(\alpha+1)(\beta+1)-(\alpha+1)\alpha
n\\
> {}&
\frac{((n+2)\alpha+\Big(1+\frac{2}{n}\Big)\beta)^2}{\frac{(n+2)^2}{n}}+(n+2)\alpha+\Big(1+\frac{2}{n}\Big)\beta-2(\alpha+1)(\beta+1)-(\alpha+1)\alpha
n\\
= {}& \Big(\frac{\beta}{n}-1\Big)(\beta+2) \geqslant 0.
\end{align*}

\textbf{Case 2: $\frac{n}{2}\leqslant \beta < n$.}

In this case, we have $\sum_{i=1}^ka_i \geqslant (n+2)\alpha+\beta+2$ and 
\begin{align*}
 {}&\sum_{i=1}^k a_i^2 + \sum_{i=1}^k a_i-2(\alpha+1)(\beta+1)-(\alpha+1)\alpha n\\
\geqslant {}&
\frac{((n+2)\alpha+\beta+2)^2}{k}+(n+2)\alpha+\beta+2-2(\alpha+1)(\beta+1)-(\alpha+1)\alpha
n\\
> {}&
\frac{((n+2)\alpha+\beta+2)^2}{\frac{(n+2)^2}{n}}+(n+2)\alpha+\beta+2-2(\alpha+1)(\beta+1)-(\alpha+1)\alpha
n\\
= {}& \frac{n-\beta}{n+2}\Big(4\alpha-\frac{n\beta-4}{n+2}\Big)\\
= {}& \frac{(n-\beta)((4\alpha-\beta)n+8\alpha+4))}{(n+2)^2}
> 0.
\end{align*}

\textbf{Case 3: $\beta < \frac{n}{2}$. }

In this case we may assume that
$$
\sum_{i=1}^ka_i =(n+2)\alpha+\beta+1,
$$
and we need to show that
$$
\sum_{i=1}^k a_i^2>2\alpha\beta+\beta+1+\alpha^2n.
$$

\textbf{Subcase 3.1: $n\geqslant 4$.}

In this subcase we may assume that $k=n+4$. We have
\begin{align*}
 {}&\sum_{i=1}^k
a_i^2-2\alpha\beta-\beta-1-\alpha^2n\\
\geqslant {}&
\frac{((n+2)\alpha+\beta+1)^2}{n+4} -2\alpha\beta-\beta-1-\alpha^2n\\
= {}& \frac{1}{n+4}(2\alpha-\beta-1)(2\alpha-\beta+n+3) \geqslant 0.
\end{align*}
The equality holds if and only if $2\alpha=\beta+1$ and $a_i=\alpha$
for $1\leqslant i \leqslant k=n+4$. At this time, we have
$$
h^0(\mathbb{F}_n,
\mathcal{O}(C))=(\alpha+1)(\beta+1)+\frac{(\alpha+1)\alpha}{2}n,
$$
and
$$
\sum_{i=1}^k\frac{a_i(a_i+1)}{2}=(\alpha+1)(\beta+1)+\frac{(\alpha+1)\alpha}{2}n.
$$
Since $p_1, p_2, \ldots, p_k$ are  in general position, this won't
happen.

\textbf{Subcase 3.2: $n=3$.}

In this subcase we may assume that $k=8$. $\beta<\frac{n}{2}$
implies $\beta=0$ or $1$.

If $\beta=0$, then
\begin{align*}
 {}&\sum_{i=1}^k
a_i^2-2\alpha\beta-\beta-1-\alpha^2n\\
\geqslant {}& \frac{(5\alpha+1)^2}{8}-1-3\alpha^2 =
\frac{1}{8}(\alpha^2+10\alpha-7)
> 0.
\end{align*}

If $\beta=1$ and $\alpha \geqslant 2$, then
\begin{align*}
 {}&\sum_{i=1}^k
a_i^2-2\alpha\beta-\beta-1-\alpha^2n\\
\geqslant {}& \frac{(5\alpha+2)^2}{8}-2\alpha-2-3\alpha^2 =
\frac{1}{8}(\alpha^2+4\alpha-12) \geqslant 0.
\end{align*}

The equality holds if and only if $\alpha = 2$ and
$a_i=\frac{12}{8}$ for $1\leqslant i \leqslant 8$. But this will not
happen since $a_i\in \mathbb{N}$.

If $\beta=1$ and $\alpha =1$, then
$h^0(\mathbb{F}_3,\mathcal{O}(h+f))=7$. For $p_1, p_2, \ldots, p_8$
in general position, $a_i>0$ holds for at most $7$ points, hence we
may assume that $a_8=0$. Then
\begin{align*}
  {}&\sum_{i=1}^k
a_i^2-2\alpha\beta-\beta-1-\alpha^2n\\
\geqslant {}&
\frac{(5\alpha+\beta+1)^2}{7}-2\alpha\beta-\beta-1-3\alpha^2 =  0.
\end{align*}
The equality holds if and only if $a_i=1$ for $1\leqslant i
\leqslant 7$. At this time, we have
$$
h^0(\mathbb{F}_3, \mathcal{O}(C))=7,
$$
and
$$
\sum_{i=1}^8\frac{a_i(a_i+1)}{2}=7.
$$
Since $p_1, p_2, \ldots, p_k$ are in general position, this won't
happen.

\textbf{Subcase 3.3: $n=2$.}

In this subcase we may assume that $k=7$. $\beta<\frac{n}{2}$
implies $\beta=0$. We have
\begin{align*}
  {}&\sum_{i=1}^k
a_i^2-2\alpha\beta-\beta-1-\alpha^2n\\
\geqslant {}& \frac{(4\alpha +1)^2}{7} -1-2\alpha^2 =
\frac{1}{7}(2\alpha^2+8\alpha-6)
> 0.
\end{align*}
\end{proof}

We also have similar examples for $n=0,1$.
\begin{example}
Let $X$ be a smooth surface which is given by blowing up at $k$
points on $\mathbb{F}_n$, $n=0$ or $1$. Then

(1) if $k\leqslant 7$ and the points are in general position, then
$(X,0)$ is a weak log del Pezzo surface;

(2) if $k\geqslant 8$, then $(X,\Delta)$ can not be a weak log del
Pezzo surface for any boundary $\Delta$.
\end{example}

Here when $n=1$, the examples are just weak del Pezzo surfaces in
the common sense, which is well-known, and the meaning for "general
position" can be stated more precisely. Hence the examples we give
here can be viewed as some kind of generalization of the traditional
examples.

Taking $n=\lfloor 2/\epsilon \rfloor$ in Theorem \ref{example2}, it
gives examples of $\epsilon$-lc weak log del Pezzo surfaces with
$\rho({X_\textrm{min}})=O(\frac{1}{\epsilon})$. So it is interesting
to ask for the existence of $\epsilon$-lc weak log del Pezzo
surfaces with large $\rho({X_\textrm{min}})$.

\begin{q}
Is there any $\epsilon$-lc weak log del Pezzo surface $(X, \Delta)$
with $\rho({X_{\rm min}})=O(\frac{1}{\epsilon^c})$ for
$c\geqslant2$?
\end{q}

\section*{Acknowledgment} The author would like to express his deep
gratitude to his supervisor Professor Yujiro Kawamata for many
suggestions, discussions, warm encouragement and support during two
years of his master course. The author would like to thank Dr. Ching-Jui
Lai for explaining some details in \cite{Lai} patiently. The author
would like to thank Mr. Pu Cao and Dr. Yifei Chen for some useful
discussions. The author would like to thank Atsushi Ito for carefully reading this manuscript and for helpful comments.
The author is grateful to the University of Tokyo for
Special Scholarship for International Students (Todai Fellowship). The auther is supported by the Grant-in-Aid for Scientiﬁc Research (KAKENHI No. 25-6549) and the Grant-in-Aid for JSPS fellows and  Leading Graduate Course for Frontiers of Mathematical Sciences and Physics.


\begin{thebibliography}{9}


\bibitem{A} V. Alexeev, \emph{Boundedness and $K^2$ for log surfaces}, Internat. J. Math.,
5(1994), pp. 779-810.


\bibitem{AM} V. Alexeev and S. Mori, \emph{Bounding singular surfaces of general
type}, in Algebra, arithmetic and geometry with applications (West
Lafayette, IN, 2000), Springer, Berlin, 2004, pp. 143-174.

\bibitem{BCHM} C. Birkar, P. Cascini, C. D. Hacon and J. M$^c$Kernan, \emph{Existence
of minimal models for varieties of log general type}, J. Amer. Math.
Soc. 23 (2010), 405-468.

\bibitem{HM} C. Hacon and J. M$^c$Kernan, \emph{On the existence of flips}, arXiv:math/0507597.

\bibitem{H} R. Hartshorne, \emph{Algebraic geometry}, Springer-Verlag, New York, 1977.
Graduate Texts in Mathematics, No. 52.

\bibitem{KM} J. Koll\'{a}r and S. Mori, \emph{Birational geometry of algebraic varieties},
Cambridge tracts in mathematics, vol. 134, Cambridge University
Press, 1998.

\bibitem{Lai} C-J. Lai, \emph{Bounding the volumes of singular Fano
threefolds}, arXiv:1204.2593v1.

\end{thebibliography}
\end{document}